\newtheorem{thm}{Theorem}
\newtheorem{cor}[thm]{Corollary}
\newtheorem{lem}[thm]{Lemma}
\newtheorem{prop}[thm]{Proposition}
\newcommand{\R}{\mathbb{R}}
\newcommand{\C}{\mathbb{C}}
\newcommand{\Aj}{{\mathcal{A}_{1,-\alpha}}}
\newcommand{\Ad}{{\mathcal{A}_{d,-\alpha}}}
\DeclareMathOperator{\re}{Re}
\begin{document}

\title{Fractional calculus for power functions}

\author{Bart{\l}omiej Dyda}

\address{
Faculty of Mathematics\\ University of Bielefeld\\
Postfach 10 01 31,
D-33501 Bielefeld, Germany\\
Tel.: +49 (0)521-1062429\\
Fax: +49 (0)521 106-89027\\
\and
 Institute of Mathematics and Computer Science\\ Wroc{\l}aw University of Technology\\
Wybrze\.ze Wyspia\'nskiego 27,
50-370 Wroc{\l}aw, Poland\\
Tel.:+48 71 320-3182\\
Fax: +48 71 328-0751\\
}
\email{bdyda (at) pwr wroc pl}

\begin{abstract}
We calculate the fractional Laplacian for functions
of the form $u(x)=(1-|x|^2)_+^p$ and $v(x)=x_d u(x)$.
As an application, we estimate the first eigenvalues of the
fractional Laplacian in a ball.
\end{abstract}

\keywords{fractional Laplacian, ball, killed stable process, eigenvalue, power function, hypergeometric function}
\subjclass[2010]{Primary 35P15; Secondary 60G52, 31C25}

\maketitle

\section{Main result and discussion}\label{sec:i}
We consider the fractional Laplacian  $\Delta^{\alpha/2}$
 defined by
\begin{equation}\label{eq:fraclapl}
 \Delta^{\alpha/2} u(x) = \Ad \lim_{\varepsilon\to 0^+}
\int_{\R^d \cap \{|y-x|>\varepsilon\}}\frac{u(y)-u(x)}{|x-y|^{d+\alpha}} \,dy.
\end{equation}
Here $\Ad = \frac{2^\alpha \Gamma(\frac{\alpha+d}{2})}{ \pi^{d/2} |\Gamma(-\frac{\alpha}{2})|}$.
This is an important operator for probability and analysis
\cite{MR2569321}, \cite{Landkof}.
The main results of this paper are the following formulae for the fractional Laplacian
of power functions.

\begin{thm}\label{thm:up}
Let $d\geq 1$, $0<\alpha<2$ and $p>-1$. Define
\begin{align}
 u_p^{(d)}(x) &= (1-|x|^2)_+^p, \quad x\in \R^d,\label{upn}\\
 v_p^{(d)}(x) &= (1-|x|^2)_+^p x_d, \quad x\in \R^d,\label{vpn}\\
 \Phi^{(d)}_{p,\alpha}(x) &=  \frac{\Ad B(-\frac{\alpha}{2}, p+1)\pi^{d/2}}{\Gamma(\frac{d}{2})} \;\;
 {} _2F_1\Big(\frac{\alpha+d}{2}, -p+\frac{\alpha}{2}; \frac{d}{2}; x \Big).
\end{align}
If $x\in \R^d$ and $|x|<1$, then
\begin{align}
 \Delta^{\alpha/2} u_p^{(d)}(x) &=\Phi^{(d)}_{p,\alpha}(|x|^2),\label{Deltaupn}\\
 \Delta^{\alpha/2} v_p^{(d)}(x) &= x_d  \Phi^{(d+2)}_{p,\alpha}(|x|^2).\label{Deltavpn}
\end{align}
\end{thm}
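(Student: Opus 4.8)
The plan is to collapse the singular $d$-dimensional integral defining $\Delta^{\alpha/2}u_p^{(d)}$ to a one-dimensional radial integral, to evaluate that integral with the Beta function, and to recognize the emerging power series in $|x|^2$ as the ${}_2F_1$ inside $\Phi^{(d)}_{p,\alpha}$. Throughout I would treat $p$ (and, where convenient, $\alpha$) as a complex parameter: for $\re p$ large every integral in sight converges absolutely, both sides of \eqref{Deltaupn} are analytic there, and the identity for general $p>-1$ then follows by analytic continuation, which also makes sense of $B(-\frac{\alpha}{2},p+1)$ despite the negative first argument.

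For \eqref{Deltaupn}: since $u_p^{(d)}$ is rotation invariant, so is $\Delta^{\alpha/2}u_p^{(d)}$, so I may fix $x$ with $|x|=r<1$, say $x=r\theta_0$ with $\theta_0\in\Sd$, and pass to spherical coordinates $y=s\theta$, $s\ge0$, $\theta\in\Sd$. Writing the numerator as $(1-s^2)_+^p-(1-r^2)^p$ and integrating out $\theta$ produces the one-dimensional kernel $k(r,s)=\int_{\Sd}|x-s\theta|^{-d-\alpha}\,d\theta$, a function of $r$ and $s$ only, expressible through a Gauss hypergeometric function via $|x-s\theta|^2=r^2+s^2-2rs\langle\theta,\theta_0\rangle$ and the standard spherical-average integral. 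This reduces the problem to $\Delta^{\alpha/2}u_p^{(d)}(x)=\Ad\int_0^\infty\big[(1-s^2)_+^p-(1-r^2)^p\big]\,k(r,s)\,s^{d-1}\,ds$, understood in the principal-value sense, with the only singularity now at $s=r$, where the bracket vanishes.

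Next I would evaluate this radial integral by expanding $k(r,s)$ and $(1-s^2)^p$ and integrating term by term over $s\in(0,1)$ and over $s\in(1,\infty)$ separately; on the outer interval the bracket is simply $-(1-r^2)^p$ and the integral converges without regularization. Each contribution is a Beta integral, and the resulting coefficients assemble, after simplification of Gamma quotients into Pochhammer symbols, into $\sum_k \frac{(\frac{\alpha+d}{2})_k(-p+\frac{\alpha}{2})_k}{(\frac{d}{2})_k\,k!}\,r^{2k}$, i.e. exactly ${}_2F_1(\frac{\alpha+d}{2},-p+\frac{\alpha}{2};\frac{d}{2};r^2)$, with the prefactor collapsing to $\Ad B(-\frac{\alpha}{2},p+1)\pi^{d/2}/\Gamma(\frac{d}{2})$. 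This bookkeeping of constants, together with the justification of the interchange of summation and the principal-value integral, is where I expect the real work to lie; the vanishing of the second ${}_2F_1$ parameter when $p=\frac{\alpha}{2}$, which forces a constant as it must by Getoor's classical computation, is a useful check on the algebra.

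Finally, for \eqref{Deltavpn} the factor $x_d$ is a solid harmonic of degree one, and rotation-invariant operators act on $x_d\,f(|x|^2)$ by shifting the effective dimension of the radial profile by twice the harmonic degree. Concretely, redoing the reduction above for $v_p^{(d)}$ introduces the angular weight $\langle\theta,e_d\rangle$ into the sphere integral; after pulling out $x_d$, the remaining angular average is precisely the one arising for a radial function in dimension $d+2$, so the first computation applies verbatim with $d$ replaced by $d+2$, yielding $x_d\,\Phi^{(d+2)}_{p,\alpha}(|x|^2)$. The main obstacle in both parts is the singular principal-value regularization at $y=x$: controlling it rigorously — rather than the Beta-integral algebra — is what forces the analytic-continuation argument and the careful inside/outside splitting of the radial integral.
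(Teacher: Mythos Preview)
Your route is genuinely different from the paper's. The paper does \emph{not} integrate out the angular variable in $y$ around the origin. Instead it writes $y=x+th$ with $h\in S^{d-1}$, $t\in\R$, and observes that the restriction of $(1-|\cdot|^2)_+^p$ to any line through $x$ is, after the affine change $t=-\langle h,x\rangle+s\sqrt{T}$ with $T=1-|x|^2+\langle h,x\rangle^2$, equal to $T^p(1-s^2)_+^p$. Thus the inner $t$-integral is literally a rescaled one-dimensional fractional Laplacian of $u_p$, computed separately by a hands-on calculation (a principal-value integral evaluated via Taylor expansion and Beta identities). What remains is an integral over $h\in S^{d-1}$ of a ${}_2F_1$, which is reduced to a single ${}_2F_1$ in $|x|^2$ by a Pfaff transformation and term-by-term integration. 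The same line-restriction trick handles $v_p^{(d)}$: one picks up $x_d$, $h_d\langle h,x\rangle$ and $h_d\sqrt{T}\,s$ pieces, matches them to $\Delta^{\alpha/2}u_p$ and $\Delta^{\alpha/2}v_p$ in one dimension, and then combines via contiguous-relation identities for ${}_2F_1$.

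Your plan is plausible in outline, but the step ``expand $k(r,s)$ and $(1-s^2)^p$ and integrate term by term, each contribution is a Beta integral'' is where I see a real gap. The angular average $k(r,s)=\int_{S^{d-1}}|x-s\theta|^{-d-\alpha}\,d\theta$ is itself a ${}_2F_1$ whose natural argument (e.g.\ $4rs/(r+s)^2$ or $s^2/r^2$) tends to $1$ as $s\to r$, precisely where the principal value lives; its power-series expansion does not converge uniformly there, so you cannot simply swap sum and integral across the singularity, and analytic continuation in $p$ does not help because the singularity sits in the kernel, not in the bracket $(1-s^2)^p-(1-r^2)^p$. Moreover, the p.v.\ in the original integral excludes $|y-x|<\varepsilon$, which after your change of variables is not $|s-r|<\varepsilon$; since $k(r,s)$ is not symmetric in $s$ about $s=r$, the two regularizations differ and this has to be tracked. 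The paper sidesteps all of this because on each line the singularity is at $t=0$ and the one-dimensional p.v.\ is the standard symmetric one. Your Bochner-type heuristic for $v_p^{(d)}$ (degree-one harmonic shifts the effective dimension by two) is morally right and is, in fact, exactly what \eqref{Deltaupn}--\eqref{Deltavpn} encode, but the paper does not invoke it as a black box; it derives it by the same line-restriction computation, so if you want to use it you should either prove it independently or follow the paper's direct calculation.
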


By $B$ and $\Gamma$ we denote Euler's beta and gamma functions,
respectively, and $_2F_1$ is Gauss' hypergeometric function \cite{Erdelyi}.
Some special cases of the above theorem were known before. A calculation
of $\Delta^{\alpha/2} u_{\frac{\alpha}{2}}^{(d)}$ was done by Getoor \cite{MR0137148}, and 
explicit expression for
$\Delta^{\alpha/2} u_{\frac{\alpha}{2}-1}^{(d)}$ and $\Delta^{\alpha/2} v_{\frac{\alpha}{2}-1}^{(d)}$
 may be deduced from \cite{Hmissi}, see also \cite{BogdanMartin}.
A formula for $\Delta^{\alpha/2} u_p^{(d)}$ with $p\in (0,1)$ and $\alpha\in (-1,1)$ is announced
in \cite{BilerImbertKarch}. Finally, (\ref{Deltaupn}) was given in \cite{DydaHardy1dim}
 for $p=\frac{\alpha-1}{2}$ and $d=1$, where it was used to obtain
a fractional Hardy inequality with a remainder term.
For similar results on fractional derivatives we refer the reader to \cite{MR1347689}.

We note that if $p=n+\alpha/2$ and $n$ is a nonnegative integer,
then  $\Delta^{\alpha/2} u_p^{(d)}$ and $ \Delta^{\alpha/2} v_p^{(d)}$ are
polynomials of degree $n$ and $n+1$, respectively. 
For $d=1$, every polynomial can be expressed as a linear combination of polynomials
of the form $(1-x^2)^j$ and $x(1-x^2)^k$. Hence, if $P$ is \emph{any} polynomial of degree $n$, then
$\Delta^{\alpha/2}(P(x)(1-x^2)^{\alpha/2})$, for $x\in (-1,1)$, is a polynomial of degree $n$, too.
This also trivially holds for $\alpha=2$.
This may allow one for the development of an explicit integro--differential calculus for $\Delta^{\alpha/2}(\cdot 1_{|x|<1})$,
which is one of the motivations for this work.
It is noteworthy that $u_p^{(d)}(x) =  {} _2F_1\Big(\frac{d}{2}, -p; \frac{d}{2}; |x|^2 \Big)$ for~\mbox{$|x|<1$},
therefore  (\ref{Deltaupn}) may be rewritten in the following elegant form
\begin{equation}
\Delta^{\alpha/2}\left({} _2F_1\Big(\frac{d}{2}, -p; \frac{d}{2}; |x|^2 \Big) 1_{|x|<1}\right) =
C_{d,\alpha,p}\;\;
 {} _2F_1\Big(\frac{d}{2} + \frac{\alpha}{2}, -p+\frac{\alpha}{2}; \frac{d}{2}; |x|^2 \Big),
\end{equation}
where $x\in\R^d$, $|x|<1$, $C_{d,\alpha,p}={\Ad B(-\frac{\alpha}{2}, p+1)\pi^{d/2}}/{\Gamma(\frac{d}{2})}$,
$d=1,2,\ldots$ and $\alpha\in (0,2)$.

Let $B=B_1^{(d)}=\{x\in \R^d:|x|<1\}$ be the unit ball in $\R^d$.
We consider the quadratic form
\[
 \mathcal{E}(u)=\frac{\Ad}{2}
\int_{\R^d\times \R^d} 
\frac{(u(x)-u(y))^2}{|x-y|^{d+\alpha}} \,dx\,dy,
\]
with the domain
\[
 D(\mathcal{E})=\{u\in L^2(\R^d): \mathcal{E}(u)<\infty,\; \textrm{$u=0$ on $B^c$}\},
\]
and the corresponding symmetric bilinear form $\mathcal{E}(\cdot, \cdot)$.
Then $(\mathcal{E}(\cdot, \cdot), D(\mathcal{E}))$ is the Dirichlet form of the $\alpha$-stable, rotation
invariant L\'evy process killed upon leaving $B$.
To point out applications of Theorem~\ref{thm:up}, we consider the  spectral problem
of finding $\phi\in D(\mathcal{E})$ such that
\begin{equation}\label{spprob}
 \mathcal{E}(\phi,g) =  \lambda \int \phi(x) g(x)\,dx, \quad g \in D(\mathcal{E}).
\end{equation}
It is known \cite{Getoor1959a} that there
exists an orthonormal basis of $L^2(B)\subset L^2(\R^d)$, consisting of  eigenfunctions $\phi_1$,  $\phi_2$, $\phi_3$, \ldots
  with the corresponding eigenvalues 
$0<\lambda_1<\lambda_2\leq \lambda_2 \leq \ldots$.
The latter means that $\phi=\phi_n$ and $\lambda=\lambda_n$ satisfy (\ref{spprob}).
We note that these eigenfunctions are in the domain of the generator, and we have that
$\Delta^{\alpha/2}\phi_n = -\lambda_n \phi_n$ on $B$  in the sense of definition (\ref{eq:fraclapl}),
see also \cite{MR1825645}. When there is a risk of confusion, we write the dimension of the underlying
space in superscripts, i.e., we write $\lambda_n^{(d)}$ for $\lambda_n$ and $\phi_n^{(d)}$ for $\phi_n$.

The eigenproblem is the main motivation for this research.
We should note that
the eigenvalues $\lambda_1$, $\lambda_2$, \ldots are not known explicitly even in the case of $d=1$ and $B=(-1,1)$.
 A number of methods to study this one-dimensional case, and more general cases, were developed by 
several authors
\cite{MR2056835},  \cite{MR2214145}, \cite{MR2273977}, \cite{MR2217951}, \cite{MR1840105},
\cite{Getoor1959b}, \cite{MR2158176}, \cite{MR2078980}, \cite{MR0147937}, \cite{MR0212621}, \cite{MR2679702}, 
\cite{KwasnickiInterval}, \cite{MR2445510}, \cite{MR2534231}, \cite{MR1119514}, \cite{MR2139213}.
The symmetry of eigenfunctions plays an important role in these investigations. 
We also note that this spectral problem
may be formulated in terms of the isotropic $\alpha$-stable L\'evy process in $\R^d$.
For details we refer the reader to \cite{MR2322501}. 

 Let $\lambda_*$ be the smallest number such that there
exists an  eigenfunction $\phi_*$ which is antisymmetric, $\phi_*(-x)=-\phi_*(x)$, and has eigenvalue $\lambda_*$.
It is conjectured, but not yet proved in the full range of $\alpha\in (0,2)$ and $d$, that $\lambda_*=\lambda_2$.
In the classical case ($\alpha=2$), and also in the $1$--dimensional case
 for $\alpha\geq 1$ \cite{MR2679702}, we do have $\lambda_*=\lambda_2$.
It is natural to ask whether  $\lambda_* = \lambda_2$.
While we do not answer this question here, the calculus of power functions given by
Theorem~\ref{thm:up} may be used to investigate the eigenfunctions of the fractional Laplacian
 in the ball in this and related problems.

We should note that there always exists an antisymmetric eigenfunction.
Indeed, there exists a non-symmetric eigenfunction $\phi$, and we may
 let $\tilde{\phi}(x)=\phi(x)-\phi(-x)$, which is an antisymmetric
eigenfunction with the same eigenvalue as $\phi$.
Similarly  $\phi_*$ may and will be assumed  antisymmetric
with respect to the last coordinate axis.

The similarity between \eqref{Deltaupn} and \eqref{Deltavpn} leads us to the following result.
\begin{prop}\label{prop:lambdaeq}
$\lambda_*^{(d)} = \lambda_1^{(d+2)}$.
\end{prop}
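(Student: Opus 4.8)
The plan is to exploit the parallel between \eqref{Deltaupn} and \eqref{Deltavpn} to set up a dimension-shifting correspondence between radial eigenfunctions in $\R^{d+2}$ and eigenfunctions of the form $x_d f(|x|)$ in $\Rd$. The first step is to upgrade Theorem~\ref{thm:up} to the pointwise identity
\[
\Delta^{\alpha/2}\big(x_d\, f(|x|)\big) = x_d\,\big[\Delta^{\alpha/2}\big(f(|\cdot|)\big)\big]_{d+2}(|x|), \qquad |x|<1,
\]
valid for a general radial profile $f$, where the right-hand side denotes the $(d+2)$-dimensional fractional Laplacian of the radial function $y\mapsto f(|y|)$, evaluated at radius $|x|$. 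For $f(r)=(1-r^2)_+^p$ this is exactly the content of \eqref{Deltaupn}--\eqref{Deltavpn}, since $\Delta^{\alpha/2}u_p^{(d+2)}=\Phi^{(d+2)}_{p,\alpha}(|\cdot|^2)$; by linearity it extends to finite combinations of such powers, and hence, by a density and closedness argument in the form domain, to the radial profiles arising as eigenfunctions. (Alternatively one proves it directly, by reducing the $d$- and $(d+2)$-dimensional singular integrals to the same one-dimensional integral.)

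With this identity, radial eigenfunctions in dimension $d+2$ and eigenfunctions of the form $x_d f(|x|)$ in dimension $d$ are in an eigenvalue-preserving bijection. First I would prove $\lambda_*^{(d)}\le\lambda_1^{(d+2)}$: the ground state $\phi_1^{(d+2)}$ is radial (by uniqueness, positivity, and rotation invariance), say $\phi_1^{(d+2)}=f_0(|\cdot|)$, and the identity shows that $x\mapsto x_d\, f_0(|x|)$ is a nonzero eigenfunction on $B_1^{(d)}$, antisymmetric in $x_d$, with the same eigenvalue $\lambda_1^{(d+2)}$; therefore $\lambda_*^{(d)}\le\lambda_1^{(d+2)}$.

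For the reverse inequality I would start from an antisymmetric ground state $\phi_*$ with eigenvalue $\lambda_*^{(d)}$ and project it onto the spherical-harmonic sector spanned by $x_d/|x|$, i.e.\ set $f(r)=c_d\int_{\Sd}\omega_d\,\phi_*(r\omega)\,d\omega$ and $u_1(x)=x_d\, f(|x|)$. Since $\Delta^{\alpha/2}$ commutes with rotations, the subspace of functions $x_d\times(\text{radial})$ is invariant, so $u_1$ is again an eigenfunction with eigenvalue $\lambda_*^{(d)}$; reading the identity above backwards turns its profile into a radial eigenfunction $y\mapsto f(|y|)$ on $B_1^{(d+2)}$ with the same eigenvalue, which forces $\lambda_1^{(d+2)}\le\lambda_*^{(d)}$, provided $u_1\neq 0$. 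To see $u_1\neq 0$, antisymmetry gives $f(r)=2c_d\int_{\Sd\cap\{\omega_d>0\}}\omega_d\,\phi_*(r\omega)\,d\omega$, so it suffices that $\phi_*$ has constant sign on the half-ball $B_1^{(d)}\cap\{x_d>0\}$. Combining the two inequalities yields $\lambda_*^{(d)}=\lambda_1^{(d+2)}$.

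The main obstacle is precisely this positivity statement: the bottom of the spectrum on the antisymmetric subspace should be attained by an eigenfunction of constant sign on the upper half-ball. I expect to obtain it from a Perron--Frobenius/Krein--Rutman argument applied to the semigroup of the killed $\alpha$-stable process restricted to functions odd in $x_d$; on the half-ball this semigroup has the strictly positive, irreducible kernel $p_t^{B}(x,y)-p_t^{B}(x,y^*)$, where $y^*$ is the reflection of $y$ across $\{x_d=0\}$, and its positivity is the reflection inequality for the ball. The second, more routine, point is the density and closedness step used to pass the dimension-shift identity from the explicit profiles $v_p^{(d)}$ to the (generally non-explicit) eigenfunction profiles.
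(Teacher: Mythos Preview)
Your strategy coincides with the paper's: both set up an isometry $T$ from the radial subspace $R_{d+2}\subset L^2(B_{d+2})$ onto $A_d=\{x_d\,g(|x|)\}\subset L^2(B_d)$ (Lemma~\ref{lem:utov} provides the normalising constant $\sqrt{\pi}$), and use the pair \eqref{Deltaupn}--\eqref{Deltavpn} to show that the fractional Laplacians intertwine through $T$, so that the spectra on $R_{d+2}$ and on $A_d$ coincide.

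Two points of difference are worth recording. First, the paper avoids your density/closedness step by working with the bounded Green operators $G_d=(-\Delta^{\alpha/2})^{-1}$ rather than with $\Delta^{\alpha/2}$ itself: once the diagram $G_dT=TG_{d+2}$ is verified on the span of $\{(1-|x|^2)_+^{n}\}_{n\ge 0}$ (where Theorem~\ref{thm:up} applies directly), boundedness of all four operators extends it to all of $R_{d+2}$ immediately---cleaner than pushing a pointwise identity for an unbounded operator through a form closure. Second, the issue you single out as the ``main obstacle''---that the smallest antisymmetric eigenvalue is actually attained in the degree-one sector $A_d$---is not addressed in the paper; after establishing the intertwining the paper simply asserts $\lambda_1^{(d+2)}=\lambda_*^{(d)}$. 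Your proposed fix is sound: the reflection inequality $p_t^B(x,y)>p_t^B(x,y^*)$ for $x,y$ in the upper half-ball is known for killed stable processes, and Krein--Rutman then produces a ground state of the odd-in-$x_d$ sector which is positive on the half-ball. One small rearrangement is needed: project \emph{that} function, rather than $\phi_*$, onto $A_d$ to obtain a nonzero element and hence equality of the smallest odd-in-$x_d$ eigenvalue with the smallest $A_d$ eigenvalue; sandwiching $\lambda_*^{(d)}$ between these two (using that $A_d$ lies in the antisymmetric subspace and that $\phi_*$ may be taken odd in $x_d$) then finishes the argument without circularity. On this point your proof is in fact more complete than the paper's.
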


Noteworthy, for $p=n+\alpha/2$  the right-hand sides of  (\ref{Deltaupn}) and (\ref{Deltavpn}) are
polynomials, which  gives an efficient way to explicitly estimate
$\lambda_1$ and $\lambda_*$ using some versions of Barta inequality \cite{MR2139213}, see Sections~\ref{sec:lower} and \ref{sec:upper}.
In particular, we obtain the following corollary.
\begin{cor}\label{cor:est}
Let
\begin{equation}\label{eq:mu}
 \mu_{d,\alpha} =  \frac{2^\alpha \Gamma(\frac{\alpha}{2}+1) \Gamma(\frac{\alpha+d}{2}) (\alpha+2)(\alpha+d) (6-\alpha)}
         { \Gamma(\frac{d}{2})(12d+(16-2d)\alpha)}.
\end{equation}
We have
\begin{equation}
\lambda_1 \geq \mu_{d,\alpha}
\end{equation}
and
\begin{equation}
  \lambda_* \geq  \mu_{d+2,\alpha}.
\end{equation}
\end{cor}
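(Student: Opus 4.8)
The plan is to deduce both bounds from the single estimate $\lambda_1^{(d)}\ge\mu_{d,\alpha}$, valid in every dimension. Granting this, Proposition~\ref{prop:lambdaeq} immediately gives
\[
 \lambda_*^{(d)}=\lambda_1^{(d+2)}\ge\mu_{d+2,\alpha},
\]
which is the second inequality, so the whole work reduces to bounding $\lambda_1$ from below. For that I would invoke a version of Barta's inequality from \cite{MR2139213}: if $\psi>0$ on $B$, $\psi=0$ on $B^c$, and $\psi$ lies in the domain of the generator, then $\lambda_1\ge\inf_{x\in B}\big(-\Delta^{\alpha/2}\psi(x)\big)/\psi(x)$. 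The task is thus to produce a test function $\psi$ for which this quotient is both explicit and large, and Theorem~\ref{thm:up} furnishes exactly such functions: whenever $p-\alpha/2$ is a nonnegative integer the hypergeometric factor in $\Phi^{(d)}_{p,\alpha}$ terminates, so $\Delta^{\alpha/2}u_p^{(d)}$ is a polynomial in $|x|^2$.

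The two cheapest cases are $p=\alpha/2$ and $p=1+\alpha/2$. For $p=\alpha/2$ the parameter $-p+\frac{\alpha}{2}=0$ kills the series, so $\Delta^{\alpha/2}u_{\alpha/2}^{(d)}$ is the negative constant $-c_0$ with $c_0=2^\alpha\Gamma(\tfrac\alpha2+1)\Gamma(\tfrac{\alpha+d}2)/\Gamma(\tfrac d2)$ (Getoor's identity); this single test function already yields $\lambda_1\ge c_0$, but \eqref{eq:mu} shows that $\mu_{d,\alpha}$ exceeds $c_0$ for $\alpha>0$, so it falls short. For $p=1+\alpha/2$ the parameter $-p+\frac{\alpha}{2}=-1$ leaves an affine polynomial, ${}_2F_1(\tfrac{\alpha+d}2,-1;\tfrac d2;t)=1-\tfrac{\alpha+d}{d}t$, and $\Delta^{\alpha/2}u_{1+\alpha/2}^{(d)}$ is a negative multiple of $1-\tfrac{\alpha+d}d|x|^2$. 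I would therefore test with the nonnegative combination $\psi=u_{\alpha/2}^{(d)}+s\,u_{1+\alpha/2}^{(d)}=(1-|x|^2)_+^{\alpha/2}\big(1+s(1-|x|^2)\big)$ with $s\ge0$, so that by linearity $\Delta^{\alpha/2}\psi$ is an explicit affine function of $|x|^2$ while $\psi>0$ on $B$.

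Writing $t=|x|^2\in[0,1)$, the Barta quotient becomes
\[
 \frac{-\Delta^{\alpha/2}\psi}{\psi}=\frac{N_s(t)}{(1-t)^{\alpha/2}\big(1+s(1-t)\big)},
\]
with $N_s$ affine and, for a suitable range of $s$, nonnegative on $[0,1)$. Minimizing over $t$ and then choosing $s$ optimally should produce precisely $\mu_{d,\alpha}$; the rational factor $(\alpha+2)(\alpha+d)(6-\alpha)/(12d+(16-2d)\alpha)$ in \eqref{eq:mu} is exactly the gain of this two-parameter family over the crude value $c_0$. The main obstacle is that the quotient is \emph{not} rational, because of the factor $(1-t)^{\alpha/2}$, so its minimum is not found by solving a polynomial equation. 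I expect to remove this by the elementary concavity bound $(1-t)^{\alpha/2}\le 1-\tfrac\alpha2 t$ on $[0,1)$: since $N_s\ge0$, enlarging the denominator only decreases the quotient, so it suffices to minimize the resulting \emph{rational} function, whose critical point and optimal $s$ can be computed in closed form. Verifying that this rational minimization collapses to the stated $\mu_{d,\alpha}$, together with the sign conditions on $N_s$ that make $\psi$ an admissible Barta supersolution, is the one genuinely computational part of the argument.
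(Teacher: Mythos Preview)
Your overall plan---the Barta-type bound with the test function $\psi=u_{\alpha/2}^{(d)}+s\,u_{1+\alpha/2}^{(d)}$, followed by Proposition~\ref{prop:lambdaeq} for the second inequality---is exactly the paper's strategy (the paper phrases the first step as a Hardy inequality for the Fitzsimmons ratio, but this amounts to the same pointwise bound). The gap is in your proposed shortcut for computing $\inf_{t\in[0,1)} f(t)$, where
\[
 f(t)=\frac{-\Delta^{\alpha/2}\psi}{\psi}
 =c_0\,\frac{1+s\,\tfrac{\alpha+2}{2}\bigl(1-(1+\tfrac{\alpha}{d})t\bigr)}{(1-t)^{\alpha/2}\bigl(1+s(1-t)\bigr)}.
\]
The concavity bound $(1-t)^{\alpha/2}\le 1-\tfrac{\alpha}{2}t$ is correct, but it erases the crucial feature that the denominator of $f$ vanishes at $t=1$: your rational minorant $g(t)$ stays bounded as $t\to1^-$, with $g(1^-)=\bigl(1-s\,\alpha(\alpha+2)/(2d)\bigr)\big/\bigl(1-\alpha/2\bigr)$, which is typically \emph{smaller} than $g(0)$. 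For example, with $d=1$, $\alpha=1$ and $s=\eta_{1,1}=4/9$ one has $g(0)=15/13=\mu_{1,1}$ but $g(1^-)=2/3$; optimising over $s$ gives at best about $1.115<15/13$. So the rational minimisation does \emph{not} collapse to $\mu_{d,\alpha}$; the concavity step is strictly lossy.

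What the paper does instead is to work with $f$ directly. For the particular value $s=\eta_{d,\alpha}=\bigl(6d-4+(4-d)\alpha-\alpha^2\bigr)/(\alpha+2)^2$, an explicit (tedious but elementary) differentiation gives
\[
 f'(t)=\frac{C\,\eta_{d,\alpha}^{2}}{(1-t)^{1+\alpha/2}\bigl(1+\eta_{d,\alpha}(1-t)\bigr)^{2}}\,(t-t_0)^{2}\ \ge\ 0,
\]
with $C>0$ and an explicit $t_0$. Hence $f$ is nondecreasing on $[0,1)$ and $\inf f=f(0)=\mu_{d,\alpha}$. The point you are missing is that $\eta_{d,\alpha}$ is chosen precisely so that the numerator of $f'$ becomes a perfect square; no linearisation of $(1-t)^{\alpha/2}$ is needed, and any such linearisation throws away the blow-up at $t=1$ that makes the minimum sit at $t=0$.
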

We note that the above result improves estimates from
\cite{MR1119514} and \cite[Corollary 1]{MR1840105} by a~factor $(\alpha+2)(\alpha+1)(6-\alpha)/(12+14\alpha) > 1$,
and it also improves some of the estimates from \cite{KwasnickiInterval}.

The paper is structured as follows. In Section~\ref{sec:dim1} we prove Theorem~\ref{thm:up}
for $d=1$, then in Section~\ref{sec:dimd} we prove the $d$-dimensional case.
In Sections~\ref{sec:lower} and \ref{sec:upper} we prove Proposition~\ref{prop:lambdaeq} and
we derive lower and upper bounds for the  eigenvalues
$\lambda_1$ and~$\lambda_*$. 

\section{One--dimensional case}\label{sec:dim1}
The following technical result is the first step in the proof of Theorem~\ref{thm:up}.

\begin{lem}\label{lem:pv}
If $p>-1$, $0<\alpha<2$ and $x\in (-1,1)$, then
\begin{align}
I_m(p) &:=  {p.v.}\int_{-1}^1 \frac{(1-tx)^{\alpha-m-2p}-1}{|t|^{1+\alpha}}\;
 (1-t^2)^p\,dt  \nonumber\\
&= B\Big(-\frac{\alpha}{2}, p+1\Big)
  \left(  {} _2F_1\Big(-\frac{\alpha}{2}, p+m-\frac{1}{2}-\frac{\alpha}{2}; \frac{1}{2}; x^2\Big)
 -1 \right),\label{eq:pv}
\end{align}
where $m=1$ or $m=2$, and $p.v.$ means the Cauchy principal value.
\end{lem}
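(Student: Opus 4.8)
The plan is to reduce the integral to a power series in $x^2$ and recognize that series as a hypergeometric function. Since $|x|<1$ and $|t|\le 1$ we have $|tx|<1$ on the whole interval, so I would expand the integrand by the binomial series
\[
(1-tx)^{\alpha-m-2p}-1 = \sum_{k=1}^\infty \frac{(-\beta)_k}{k!}(tx)^k,\qquad \beta:=\alpha-m-2p,
\]
where $(a)_k=\Gamma(a+k)/\Gamma(a)$ is the Pochhammer symbol; the $k=0$ term cancels the subtracted $1$. Integrating term by term against $|t|^{-1-\alpha}(1-t^2)^p$, the odd powers integrate to zero by antisymmetry --- which is exactly what the principal value provides, since for $\alpha\ge 1$ those terms are not separately integrable --- while the even powers, after the substitution $s=t^2$, give
\[
\int_{-1}^1 \frac{t^{2j}}{|t|^{1+\alpha}}(1-t^2)^p\,dt = \int_0^1 s^{\,j-1-\alpha/2}(1-s)^p\,ds = B\Big(j-\tfrac{\alpha}{2},p+1\Big),
\]
which converges for every $j\ge 1$ because $\alpha<2$ and $p>-1$. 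Hence $I_m(p)=\sum_{j\ge1}\frac{(-\beta)_{2j}}{(2j)!}B(j-\frac{\alpha}{2},p+1)\,x^{2j}$.

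It then remains to show that the $j$-th coefficient above equals $B(-\frac{\alpha}{2},p+1)$ times the $j$-th coefficient of the hypergeometric series on the right of (\ref{eq:pv}). For this I would use the duplication identities $(2j)!=4^j\,j!\,(\tfrac12)_j$ and $(a)_{2j}=4^j(\tfrac{a}{2})_j(\tfrac{a+1}{2})_j$. The role of the exponent $\beta=\alpha-m-2p$ is that $-\beta=2p+m-\alpha$ has half-arguments $p+1-\frac{\alpha}{2}$ and $p+m-\frac12-\frac{\alpha}{2}$ for both admissible values $m\in\{1,2\}$, so that $(-\beta)_{2j}=4^j(p+1-\frac{\alpha}{2})_j(p+m-\frac12-\frac{\alpha}{2})_j$. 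The factor $(p+1-\frac{\alpha}{2})_j$ cancels the $\Gamma(j+p+1-\frac{\alpha}{2})$ in the denominator of $B(j-\frac{\alpha}{2},p+1)$, the powers of $4$ cancel, and writing $\Gamma(j-\frac{\alpha}{2})=(-\frac{\alpha}{2})_j\Gamma(-\frac{\alpha}{2})$ turns the $j$-th coefficient into
\[
B\Big(-\tfrac{\alpha}{2},p+1\Big)\frac{(-\frac{\alpha}{2})_j\,(p+m-\frac12-\frac{\alpha}{2})_j}{(\tfrac12)_j\,j!},
\]
which is exactly the $j$-th term of $B(-\frac{\alpha}{2},p+1)\,{}_2F_1(-\frac{\alpha}{2},p+m-\frac12-\frac{\alpha}{2};\frac12;x^2)$. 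Summing over $j\ge1$ then yields (\ref{eq:pv}).

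The one genuinely delicate point is justifying the interchange of the summation with the principal-value limit. I would write $I_m(p)=\lim_{\varepsilon\to0^+}\int_{\varepsilon<|t|<1}(\,\cdots\,)\,dt$ and note that on each such region the binomial series converges uniformly, since $|tx|\le|x|<1$, so term-by-term integration is legitimate for fixed $\varepsilon$. The truncated even-power integrals increase to $B(j-\frac{\alpha}{2},p+1)$ as $\varepsilon\downarrow0$ and are dominated by it; because the majorant $\sum_j\frac{|(-\beta)_{2j}|}{(2j)!}B(j-\frac{\alpha}{2},p+1)|x|^{2j}$ is --- by the same Pochhammer computation --- a convergent hypergeometric-type series for $|x|^2<1$, dominated convergence lets me pass the limit through the sum. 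This convergence bookkeeping, rather than any single algebraic step, is where I expect the bulk of the work to lie.
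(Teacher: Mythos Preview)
Your proof is correct and follows essentially the same approach as the paper: expand $(1-tx)^{\alpha-m-2p}-1$ as a binomial series, discard odd powers by symmetry via the principal value, evaluate the surviving even moments as beta functions, and use the duplication identities to collapse the coefficients to the ${}_2F_1$ series, the key point being that for $m\in\{1,2\}$ one of the two half-Pochhammers $(p+\tfrac{m}{2}-\tfrac{\alpha}{2})_j$, $(p+\tfrac{m+1}{2}-\tfrac{\alpha}{2})_j$ always equals $(p+1-\tfrac{\alpha}{2})_j$ and cancels. Your treatment is in fact slightly more careful than the paper's, which neither justifies the interchange of sum and principal value nor explicitly records the edge case $p=\tfrac{\alpha}{2}-1$ beyond noting that $B(-\tfrac{\alpha}{2},p+1)=0$ there.
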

\begin{proof}
We assume that
$p\neq\frac{\alpha-2}{2}$, since otherwise the beta function on the right-hand side of (\ref{eq:pv}) is zero
and the result is obvious.
We have
\begin{align*}
I_m(p) &=
  \, {p.v.}\int_{-1}^1 \frac{(1-tx)^{\alpha-m-2p}-1}{|t|^{1+\alpha}}\;
 (1-t^2)^p\,dt \\
&=
  \int_{-1}^1 \sum_{k=2}^\infty \frac{(2p+m-\alpha)_k (tx)^k}{ k! |t|^{1+\alpha}}\;
 (1-t^2)^p\,dt \\
&=
  2 \int_{0}^1  \sum_{k=1}^\infty \frac{ (2p+m-\alpha)_{2k} (tx)^{2k}}{ (2k)! |t|^{1+\alpha}}\;
 (1-t^2)^p\,dt \\
&=\bigg(\sum_{k=0}^\infty \frac{ (2p+m-\alpha)_{2k} B\Big(k-\frac{\alpha}{2}, p+1\Big)}{ (2k)!} x^{2k}\bigg) - 
 B\Big(-\frac{\alpha}{2}, p+1\Big)\\
&=:S_m - B\Big(-\frac{\alpha}{2}, p+1\Big).
\end{align*}
Here $(a)_n$ is the Pochhammer symbol, that is, $(a)_0=1$ and $(a)_n = a(a+1)\ldots (a+n-1)$ for $n=1,2,\ldots$.
We observe that 
\begin{equation}\label{eq:wz1}
 (2p+m-\alpha)_{2k} = 2^{2k} \left(p+\frac{m}{2}-\frac{\alpha}{2}\right)_k \left(p+\frac{m+1}{2}-\frac{\alpha}{2}\right)_k,
\end{equation}
and, by the doubling formula,
\begin{equation}\label{doubling}
 \Gamma(2x)=\Gamma(x)\Gamma(x+1/2)2^{2x-1}/\Gamma(1/2),
\end{equation}
applied to $2x=2k+1$, we have
\begin{equation}\label{eq:wz2}
(2k)! = 2^{2k} {\textstyle \left(\frac{1}{2}\right)\!_k }k!.
\end{equation}
We also have,
\begin{equation}\label{eq:wz3}
B\Big(k-\frac{\alpha}{2}, p+1\Big) = \frac{\left(-\frac{\alpha}{2}\right)_k }{ \left(p+1-\frac{\alpha}{2}\right)_k}B\Big(-\frac{\alpha}{2}, p+1\Big).
\end{equation}
Thus, by (\ref{eq:wz1}), (\ref{eq:wz2}) and (\ref{eq:wz3}) we obtain
\begin{align*}
S_m&= B\Big(-\frac{\alpha}{2}, p+1\Big)
 \sum_{k=0}^\infty \frac{\left(-\frac{\alpha}{2}\right)_k \left(p+\frac{m}{2}-\frac{\alpha}{2}\right)_k \left(p+\frac{m+1}{2}-\frac{\alpha}{2}\right)_k 
}{ \left(p+1-\frac{\alpha}{2}\right)_k  \left(\frac{1}{2}\right)_k k! }\, x^{2k}.
\end{align*}
For $m=1$ or $m=2$ the factor $\left(p+1-\frac{\alpha}{2}\right)_k$ in the denominator cancels with one of the terms
in the numerator, and the result follows.
\end{proof}

In the following lemma we prove (\ref{Deltaupn}) for $d=1$.
Recall that
\begin{equation}\label{up}
 u_p(x) = (1-x^2)_+^p,\quad x\in\R.
\end{equation}

\begin{lem}\label{lem:Deltaup}
If $0<\alpha<2$, $p>-1$  and $x\in (-1,1)$ then
\begin{align}\label{Deltaup}
 \Delta^{\alpha/2} u_p(x) &=\Aj B\Big(-\frac{\alpha}{2}, p+1\Big)
  {} _2F_1\Big(-\frac{\alpha}{2}, p+\frac{1}{2}-\frac{\alpha}{2}; \frac{1}{2}; x^2\Big)
(1-x^2)^{p-\alpha}\\
&=\Aj B\Big(-\frac{\alpha}{2}, p+1\Big)
 {} _2F_1\Big(\frac{\alpha+1}{2}, -p+\frac{\alpha}{2}; \frac{1}{2}; x^2\Big).\label{Deltaup2}
\end{align}
\end{lem}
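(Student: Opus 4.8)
The plan is to evaluate the principal-value integral in (\ref{eq:fraclapl}) by a Möbius change of variables that straightens the weight. Since $u_p$ is supported on $[-1,1]$, I would first split
\[
\Delta^{\alpha/2}u_p(x) = \Aj\left[\text{p.v.}\int_{-1}^1\frac{(1-y^2)^p-(1-x^2)^p}{|x-y|^{1+\alpha}}\,dy - (1-x^2)^p\int_{|y|>1}\frac{dy}{|x-y|^{1+\alpha}}\right].
\]
The exterior integral is elementary and equals $\frac1\alpha[(1-x)^{-\alpha}+(1+x)^{-\alpha}]$. For the inner (singular) integral I would substitute $y=\frac{x-t}{1-tx}$, the involution fixing the interval and sending $0\mapsto x$; the identities $1-y^2=\frac{(1-x^2)(1-t^2)}{(1-tx)^2}$, $x-y=\frac{t(1-x^2)}{1-tx}$ and $dy=\frac{(1-x^2)}{(1-tx)^2}\,dt$ turn the inner integral into $\frac{1}{(1-x^2)^{\alpha-p}}$ times a principal value over $(-1,1)$ in $t$.

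After collecting powers of $(1-tx)$, the transformed integrand is $\frac{(1-t^2)^p(1-tx)^{\alpha-1-2p}-(1-tx)^{\alpha-1}}{|t|^{1+\alpha}}$; I would then add and subtract $(1-t^2)^p$ in the numerator to split it as $I_1(p)$ (in the notation of Lemma~\ref{lem:pv}, with $m=1$) plus a remainder $J=\text{p.v.}\int_{-1}^1\frac{(1-t^2)^p-(1-tx)^{\alpha-1}}{|t|^{1+\alpha}}\,dt$. The term $I_1(p)$ is exactly what Lemma~\ref{lem:pv} evaluates, and it already carries the target hypergeometric function, up to the additive constant $-B(-\frac\alpha2,p+1)$ present in the formula of Lemma~\ref{lem:pv}. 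Everything therefore reduces to showing that $J$, together with the exterior integral, reconstitutes that missing constant.

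To finish I would compute $J$ by writing it as $A-B$ with $A=\int_{-1}^1\frac{(1-t^2)^p-1}{|t|^{1+\alpha}}\,dt$ and $B=\text{p.v.}\int_{-1}^1\frac{(1-tx)^{\alpha-1}-1}{|t|^{1+\alpha}}\,dt$. The first is an analytically continued beta integral, $A=B(-\frac\alpha2,p+1)+\frac2\alpha$, while the second is precisely $I_1(0)$, which Lemma~\ref{lem:pv} together with the elementary evaluation ${}_2F_1(-\frac\alpha2,\frac12-\frac\alpha2;\frac12;x^2)=\frac12[(1+x)^\alpha+(1-x)^\alpha]$ gives in closed form. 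The main obstacle is exactly this bookkeeping step: after multiplying by the appropriate powers of $(1-x^2)$, the elementary $(1\pm x)^{\pm\alpha}$ contributions from $B$ and from the exterior integral must cancel identically, leaving only $(1-x^2)^{p-\alpha}B(-\frac\alpha2,p+1)$, which cancels the additive constant and yields (\ref{Deltaup}). A minor technical point along the way is that the symmetric cutoff $|y-x|>\varepsilon$ does not map to a symmetric cutoff in $t$; since the singular part of the integrand near $t=0$ is odd, the discrepancy vanishes in the limit and the two principal values agree. Finally, (\ref{Deltaup2}) follows from (\ref{Deltaup}) by Euler's transformation ${}_2F_1(a,b;c;z)=(1-z)^{c-a-b}{}_2F_1(c-a,c-b;c;z)$ with $a=-\frac\alpha2$, $b=p+\frac12-\frac\alpha2$, $c=\frac12$, $z=x^2$.
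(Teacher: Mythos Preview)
Your proposal is correct and follows essentially the same route as the paper: the M\"obius substitution $y=\frac{x-t}{1-tx}$, the split isolating $I_1(p)$, the evaluation of the remaining pieces $A$ and $B$, and Euler's transformation for \eqref{Deltaup2} are exactly the ingredients the paper uses. The only difference is packaging: the paper imports the outcome of the substitution and the $A$, $B$ computations from \cite[Lemma~2.1]{DydaHardy1dim} as the ready-made formula \eqref{Lup}, whereas you re-derive that formula in place and make the cancellation of the $(1\pm x)^{\pm\alpha}$ exterior contributions explicit.
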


\begin{proof}
We recall from \cite[Lemma 2.1]{DydaHardy1dim} the following formula
\begin{align}
 L u_p(x) &=
\frac{(1-x^2)^{p-\alpha}}{\alpha} \Bigg(
(1 - x)^\alpha + (1 + x)^\alpha
 \nonumber\\
&
- (2p+2-\alpha)B(p+1,1-\alpha/2)
+\alpha I_1(p) 
\Bigg), \label{Lup}
\end{align}
where $I_1(p)$ is given by (\ref{eq:pv}).

By $(2p+2-\alpha)B(p+1,1-\alpha/2) = - \alpha B(p+1,-\alpha/2)$ and  Lemma~\ref{lem:pv},
\begin{align*}
\alpha I_1(p) - (2p+2-\alpha)&B(p+1,1-\alpha/2)\\
 &=\alpha B\Big(-\frac{\alpha}{2}, p+1\Big) {} _2F_1\Big(-\frac{\alpha}{2}, p+\frac{1}{2}-\frac{\alpha}{2}; \frac{1}{2}; x^2\Big).
\end{align*}
This proves (\ref{Deltaup}). The formula (\ref{Deltaup2}) follows from \cite[formula 2.9(2), page 105]{Erdelyi}.
\end{proof}

\begin{table}[ht]
\caption{The fractional Laplacian for some functions vanishing outside of $(-1,1)$.}
\centering
\begin{tabular}{c c}
\hline\hline
$u(x)$ on $(-1,1)$ & $\Delta^{\alpha/2} u(x)$ for  $x\in(-1,1)$ \\ [0.5ex]
\hline
$(1-x^2)^{-1+\alpha/2}$ & $0$ \\
$(1-x^2)^{\alpha/2}$ & $-\Gamma(\alpha+1)$ \\
$(1-x^2)^{1+\alpha/2}$ & $-\Gamma(\alpha+1) \frac{\alpha+2}{2}(1-(1+\alpha)x^2)$ \\
$(1-x^2)^{2+\alpha/2}$ & $-\Gamma(\alpha+1)\frac{\alpha+2}{2}\frac{\alpha+4}{4} 
   \Big( 1-(2\alpha+2)x^2 + (\frac{\alpha}{3}+1)(\alpha+1)x^4 \Big)$\\
\hline\hline
\end{tabular}
\label{table:values}
\end{table}

In the following lemma we prove (\ref{Deltavpn}) for $d=1$.

\begin{lem}\label{lem:Deltavp}
 Let $p>-1$ and $v_p(x) = x(1-x^2)_+^p$ for $x\in \R$.
For $0<\alpha<2$ and $x\in (-1,1)$ we have
\begin{align}
 \Delta^{\alpha/2} v_p(x) &=\Aj B\Big(-\frac{\alpha}{2}, p+1\Big) (\alpha+1)\label{Deltavp}\\
&\quad \times  {} _2F_1\Big(-\frac{\alpha}{2}, p+\frac{3}{2}-\frac{\alpha}{2}; \frac{3}{2}; x^2\Big)
x(1-x^2)^{p-\alpha}\nonumber\\
&=\Aj B\Big(-\frac{\alpha}{2}, p+1\Big) (\alpha+1) \;\;
{} _2F_1\Big(\frac{\alpha+3}{2}, -p+\frac{\alpha}{2}; \frac{3}{2}; x^2\Big)x.\label{Deltavp2}
\end{align}
\end{lem}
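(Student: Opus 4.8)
The plan is to exploit the first-order relation $v_p = -\frac{1}{2(p+1)}\,u_{p+1}'$, which holds as an identity of functions on all of $\R$ because $u_{p+1}(x)=(1-x^2)_+^{p+1}$ has derivative $-2(p+1)x(1-x^2)_+^{p}=-2(p+1)v_p(x)$. Since $\Delta^{\alpha/2}$ is translation invariant it commutes with $\frac{d}{dx}$, so on $(-1,1)$ one expects $\Delta^{\alpha/2}v_p = -\frac{1}{2(p+1)}\frac{d}{dx}\big(\Delta^{\alpha/2}u_{p+1}\big)$. Feeding in the closed form (\ref{Deltaup2}) of Lemma~\ref{lem:Deltaup} applied to $u_{p+1}$, and differentiating the hypergeometric factor by $\frac{d}{dx}\,{}_2F_1(a,b;c;x^2)=2x\,\frac{ab}{c}\,{}_2F_1(a+1,b+1;c+1;x^2)$ with $a=\frac{\alpha+1}{2}$, $b=-p-1+\frac{\alpha}{2}$, $c=\frac12$, produces exactly the hypergeometric ${}_2F_1(\frac{\alpha+3}{2},-p+\frac{\alpha}{2};\frac32;x^2)$ and the prefactor $x$ that appear in (\ref{Deltavp2}).

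It then remains only to track the constant. Differentiation brings down the factor $(\alpha+1)(-p-1+\frac{\alpha}{2})=-(\alpha+1)(p+1-\frac{\alpha}{2})$, while the ratio $B(-\frac{\alpha}{2},p+2)/B(-\frac{\alpha}{2},p+1)=(p+1)/(p+1-\frac{\alpha}{2})$ cancels the factor $(p+1-\frac{\alpha}{2})$ against the $\frac{1}{2(p+1)}$; after simplification the constant collapses to $\Aj B(-\frac{\alpha}{2},p+1)(\alpha+1)$, which is precisely the prefactor in (\ref{Deltavp2}). Finally, the equivalent first form (\ref{Deltavp}) follows from (\ref{Deltavp2}) by the same Euler transformation \cite[formula 2.9(2)]{Erdelyi} used to pass between (\ref{Deltaup}) and (\ref{Deltaup2}): with $a=-\frac{\alpha}{2}$, $b=p+\frac32-\frac{\alpha}{2}$, $c=\frac32$ one has $c-a-b=\alpha-p$, so that the factor $(1-x^2)^{p-\alpha}$ is absorbed correctly.

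The step that needs genuine care, and the one I expect to be the main obstacle, is the interchange $\frac{d}{dx}\Delta^{\alpha/2}u_{p+1}=\Delta^{\alpha/2}\frac{d}{dx}u_{p+1}$ at interior points, since $u_{p+1}$ need not be smooth at $|x|=1$ and the formula (\ref{Deltaup2}) is only valid for $|x|<1$. I would justify it by writing the operator in its symmetric form $\Delta^{\alpha/2}f(x)=\Aj\int_0^\infty \frac{f(x+z)+f(x-z)-2f(x)}{z^{1+\alpha}}\,dz$, valid without a principal value for any $f$, and then differentiating under the integral: for $x$ in a compact subinterval of $(-1,1)$ the integrand and its $x$-derivative are dominated uniformly, using the boundedness of the second and third derivatives of $u_{p+1}$ near $z=0$ and the compact support for large $z$, so dominated convergence applies despite the boundary behaviour as $x\pm z\to\pm1$.

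If one prefers a self-contained argument avoiding this interchange, the direct route parallels Lemma~\ref{lem:Deltaup}: the substitution $y=\frac{x-t}{1-xt}$ turns the defining integral for $\Delta^{\alpha/2}v_p$ into one whose integrand carries the exponent $\alpha-2-2p=\alpha-m-2p$ with $m=2$, which is exactly why Lemma~\ref{lem:pv} was recorded for $m=2$. Relative to the even case, the extra factor $x$ in $v_p$ splits $x-t$ into an even and an odd part in $t$; the even part reduces, via $I_2(p)$ and the Pochhammer identities (\ref{eq:wz1})--(\ref{eq:wz3}), to the series for ${}_2F_1(-\frac{\alpha}{2},p+\frac32-\frac{\alpha}{2};\frac32;x^2)$, while the odd-in-$t$ term, which is the genuinely new contribution with no analogue in Lemma~\ref{lem:Deltaup}, must be integrated separately and shown to combine into the same hypergeometric, yielding (\ref{Deltavp}) directly.
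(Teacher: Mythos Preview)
Your primary route---writing $v_p=-\tfrac{1}{2(p+1)}\,u_{p+1}'$ and commuting $\Delta^{\alpha/2}$ with $\tfrac{d}{dx}$---is correct and genuinely different from what the paper does. The paper follows precisely the ``self-contained'' alternative you sketch at the end: it applies the substitution $t=\tfrac{x-y}{1-xy}$ directly to the principal-value integral for $\Delta^{\alpha/2}v_p$, splits off an even-in-$t$ piece handled by $I_2(p)$ from Lemma~\ref{lem:pv}, evaluates the remaining odd-in-$t$ integral (called $K$ there) as a separate hypergeometric series, and then combines the two pieces via the contiguous relation \cite[2.8(35)]{Erdelyi}. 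Your differentiation trick bypasses the computation of $K$ entirely and is appreciably shorter; the algebraic check of the constant is correct.

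The one place needing repair is the dominated-convergence step when $-1<p<0$. In that range $u_{p+1}'(y)\sim(1-|y|)^p$ blows up as $|y|\to1^-$, so the $x$-derivative of the symmetric-form integrand contains the term $u_{p+1}'(x+z)/z^{1+\alpha}$, whose singularity sits at $z=1-x$ and \emph{moves with $x$}; consequently there is no integrable majorant in $z$ uniform over a compact $x$-interval, and dominated convergence does not apply as stated. Two standard fixes work: (i) prove the identity first for large $p$ (e.g.\ $p>2$, where $u_{p+1}\in C^3(\R)$ and your domination argument is valid verbatim) and then extend by analytic continuation in $p$, both sides of (\ref{Deltavp2}) being analytic in $p$ for $\re p>-1$ at each fixed $x\in(-1,1)$; or (ii) replace dominated convergence by Fubini on $\int_{x_0}^{x}\!\int_0^\infty|\partial_t g(t,z)|\,dz\,dt$, which is finite because $u_{p+1}'\in L^1(\R)$ for $p>-1$, and then invoke the fundamental theorem of calculus.
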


\begin{proof}
We write
\begin{align*}
\mathcal{A}^{-1}_{1,-\alpha}&\Delta^{\alpha/2} v_p(x) \\
&=p.v. \int_{-1}^1 \frac{y(1-y^2)^p-x(1-x^2)^p}{|y-x|^{1+\alpha}}\,dy
- v_p(x) \int_{\R\setminus(-1,1)} \frac{dy}{|y-x|^{1+\alpha}}\\
&=p.v. \int_{-1}^1 \frac{y(1-y^2)^p-x(1-x^2)^p}{|y-x|^{1+\alpha}}\,dy
- \frac{v_p(x)}{\alpha} \left(\frac{1}{(x+1)^\alpha} + \frac{1}{(1-x)^\alpha} \right) \\
&=: I - \frac{v_p(x)}{\alpha} \left(\frac{1}{(x+1)^\alpha} + \frac{1}{(1-x)^\alpha} \right).
\end{align*}
To evaluate $I$,  we change the variable to $t=\frac{x-y}{1-xy}$, see \cite[the proof of Lemma~2.1]{DydaHardy1dim}
for more details. We obtain
\begin{align*}
I &=
(1-x^2)^{p-\alpha}
 p.v.\int_{-1}^1 \frac{(1-t^2)^p (x-t) - x(1-tx)^{2p+1}}{|t|^{1+\alpha}}\;
 (1-tx)^{\alpha-2-2p}\,dt\\
&=
 (1-x^2)^{p-\alpha} \Bigg[ 
  x p.v.\int_{-1}^1 \frac{(1-tx)^{\alpha-2p-2} - 1}{|t|^{1+\alpha}}(1-t^2)^p\,dt\\
& + x p.v.\int_{-1}^1 \frac{(1-t^2)^p - 1}{|t|^{1+\alpha}}\,dt
 + x p.v.\int_{-1}^1 \frac{1 - (1-tx)^{\alpha-1}}{|t|^{1+\alpha}}\,dt\\
& - p.v.\int_{-1}^1 \frac{(1-t^2)^p t(1-tx)^{\alpha-2p-2}}{|t|^{1+\alpha}}\,dt \Bigg].
\end{align*}
We have by \cite[the proof of Lemma~2.1]{DydaHardy1dim}
\begin{align*}
p.v.\int_{-1}^1 \frac{(1-t^2)^p - 1}{|t|^{1+\alpha}}\,dt &= \frac{2}{\alpha}\left[1-(p+1-\alpha/2)B(p+1,1-\alpha/2)\right],\\
p.v.\int_{-1}^1 \frac{1 - (1-tx)^{\alpha-1}}{|t|^{1+\alpha}}\,dt &=
  \frac{1}{\alpha}\Big( 1 - x \Big)^\alpha + \frac{1}{\alpha}\Big( 1 + x \Big)^\alpha -  \frac{2}{\alpha}.
\end{align*}
By Lemma~\ref{lem:pv} we obtain
\begin{align*}
p.v.\int_{-1}^1 &\frac{(1-tx)^{\alpha-2p-2} - 1}{|t|^{1+\alpha}}(1-t^2)^p\,dt 
=I_2(p)\\
& = B\Big(-\frac{\alpha}{2}, p+1\Big) 
\left( {} _2F_1\Big(-\frac{\alpha}{2}, p+\frac{3}{2}-\frac{\alpha}{2}; \frac{1}{2}; x^2\Big) - 1\right).
\end{align*}
For $p\neq\frac{\alpha-2}{2}$ we have
\begin{align*}
 K&:= p.v.\int_{-1}^1 \frac{(1-t^2)^p t(1-tx)^{\alpha-2p-2}}{|t|^{1+\alpha}}\,dt \\
&=p.v.\int_{-1}^1 \frac{(1-t^2)^p t}{|t|^{1+\alpha}}\sum_{k=0}^\infty \frac{(2p+2-\alpha)_k}{k!} (tx)^k \,dt \\
&=\sum_{k=0}^\infty 2 \int_0^1 \frac{(1-t^2)^p t}{|t|^{1+\alpha}} \frac{(2p+2-\alpha)_{2k+1}}{ (2k+1)!} (tx)^{2k+1} \,dt \\
&=\sum_{k=0}^\infty B(p+1,\frac{2k+2-\alpha}{2}) \frac{(2p+2-\alpha)_{2k+1}}{ (2k+1)!} x^{2k+1}.
\end{align*}
Using the  doubling formula (\ref{doubling})  for $x=k+1$, we obtain
\[
 (2k+1)! = 2^{2k+1} {\textstyle \left(\frac{1}{2}\right)\!_{k+1} }k!.
\]
We also have
\[
 (2p+2-\alpha)_{2k+1} = 2^{2k+1} (p+1-\frac{\alpha}{2})_{k+1} (p+\frac{3}{2}-\frac{\alpha}{2})_{k}\;,
\]
and
\[
 B(p+1,\frac{2k+2-\alpha}{2}) = B(-\frac{\alpha}{2}, p+1) \frac{ (-\frac{\alpha}{2})_{k+1}}{ (p+1-\frac{\alpha}{2})_{k+1}}.
\]
Hence
\begin{align*}
 K&=  B(-\frac{\alpha}{2}, p+1) \sum_{k=0}^\infty 
  \frac{ (-\frac{\alpha}{2})_{k+1} (p+\frac{3}{2}-\frac{\alpha}{2})_{k}}
    { \left(\frac{1}{2}\right)\!_{k+1} k! } x^{2k+1}.\\
 &= -\alpha B(-\frac{\alpha}{2}, p+1) \sum_{k=0}^\infty 
  \frac{ (1-\frac{\alpha}{2})_{k} (p+\frac{3}{2}-\frac{\alpha}{2})_{k}}
    { \left(\frac{3}{2}\right)\!_{k} k! } x^{2k+1}\\
&=-\alpha B\Big(-\frac{\alpha}{2}, p+1\Big) x \cdot {}_2F_1\Big(1-\frac{\alpha}{2}, p+\frac{3}{2}-\frac{\alpha}{2}; \frac{3}{2}; x^2\Big).
\end{align*}
This holds also for $p=\frac{\alpha-2}{2}$, since in this case we have $K=0$.
Thus,
\begin{align*}
 \Delta^{\alpha/2} v_p(x)& =\mathcal{A}_{1,-\alpha}
B\Big(-\frac{\alpha}{2}, p+1\Big) (1-x^2)^{p-\alpha} x  \\
&\quad \times
   \left( {} _2F_1\Big(-\frac{\alpha}{2}, p+\frac{3}{2}-\frac{\alpha}{2}; \frac{1}{2}; x^2\Big)
 + \alpha \;\; {}_2F_1\Big(1-\frac{\alpha}{2}, p+\frac{3}{2}-\frac{\alpha}{2}; \frac{3}{2}; x^2\Big) \right).
\end{align*}
Formula (\ref{Deltavp}) follows by \cite[formula 2.8(35), page 103]{Erdelyi},
and (\ref{Deltavp2}) is then a consequence of \cite[formula 2.9(2), page 105]{Erdelyi}.
\end{proof}

In Table~\ref{table:valuesvp} we list the  fractional Laplacian of a few functions $v_p$ .
We note that the first example in Table~\ref{table:valuesvp} may be considered
a~linear combination of Martin kernels of the interval, see \cite[(89)]{MR2365478} and the references given there.
\begin{table}[ht]
\caption{The fractional Laplacian for some functions, when defined as zero outside of $(-1,1)$.}
\centering
\begin{tabular}{c c}
\hline\hline
$v(x)$ on $(-1,1)$ & $\Delta^{\alpha/2} v(x)$ for $x\in(-1,1)$ \\ [0.5ex]
\hline
$x(1-x^2)^{-1+\alpha/2}$ & $0$ \\
$x(1-x^2)^{\alpha/2}$ & $-\Gamma(\alpha+2)x$ \\
$x(1-x^2)^{1+\alpha/2}$ & $- \frac{\Gamma(\alpha+3)}{6}(3-(3+\alpha)x^2)x$ \\
$x(1-x^2)^{2+\alpha/2}$ & $- \frac{\Gamma(\alpha+3)(\alpha+4)}{120} 
   \Big( 15-(10\alpha+30)x^2 + (\alpha +3)(\alpha+5)x^4 \Big)x$\\
\hline\hline
\end{tabular}
\label{table:valuesvp}
\end{table}

\section{Multi-dimensional case}\label{sec:dimd}
We recall the notation (\ref{upn}) and note that $u_p^{(d)}(x) = u_p(|x|)$ for $x\in \R^d$,
with $u_p$ given by (\ref{up}).
We let $S^{(d-1)} = \{x\in \R^d:|x|=1\}$, the unit sphere in $\R^d$.

\begin{lem}\label{lem:Deltaradial}
Let $d\geq 2$, $0<\alpha<2$ and $p>-1$. If $x\in \R^d$ and $|x|<1$, then
\begin{equation}\label{eq:Deltaradial}
 \Delta^{\alpha/2} u_p^{(d)}(x) =
 \frac{\Ad}{2\Aj} \int_{S^{d-1}} (1- |x|^2 + |h_dx|^2)^{p-\alpha/2}
    \Delta^{\alpha/2} u_p\bigg(  \frac{|x|h_d}{\sqrt{1- |x|^2 + |h_dx|^2}}  \bigg) \, dh.
\end{equation}
\end{lem}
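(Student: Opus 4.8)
The goal of Lemma~\ref{lem:Deltaradial} is to reduce the $d$-dimensional fractional Laplacian of a radial function $u_p^{(d)}$ to the one-dimensional case, for which we already have the explicit formula~\eqref{Deltaup}. The plan is to start from the defining singular integral~\eqref{eq:fraclapl} for $\Delta^{\alpha/2} u_p^{(d)}(x)$ and exploit the rotational symmetry of $u_p^{(d)}$ to collapse the integration over $\R^d$ to a family of one-dimensional integrals along lines, reassembled by integrating over the sphere $S^{d-1}$.

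First I would write
\[
 \Delta^{\alpha/2} u_p^{(d)}(x) = \Ad \lim_{\varepsilon\to 0^+} \int_{\{|y-x|>\varepsilon\}} \frac{u_p(|y|)-u_p(|x|)}{|x-y|^{d+\alpha}}\,dy,
\]
and decompose the ambient integration in terms of directions. The natural device is to use polar coordinates \emph{centered at $x$}, writing $y = x + r h$ with $r>0$ and $h\in S^{d-1}$, so that $dy = r^{d-1}\,dr\,dh$ and $|x-y|^{d+\alpha}=r^{d+\alpha}$. This gives
\[
 \Delta^{\alpha/2} u_p^{(d)}(x) = \Ad \int_{S^{d-1}} \bigg( \mathrm{p.v.}\int_{0}^\infty \frac{u_p(|x+rh|)-u_p(|x|)}{r^{1+\alpha}}\,dr \bigg)\,dh.
\]
The inner integral runs over the half-line in direction $h$; to turn it into a genuine one-dimensional fractional Laplacian I would symmetrize by pairing the directions $h$ and $-h$, replacing the half-line by the full line through $x$ in direction $h$ and introducing the factor $\tfrac12$ (together with the constant $\Aj$ to match normalizations). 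The key algebraic observation is that along the line $s\mapsto x+sh$ the squared norm is $|x+sh|^2 = |x|^2 + 2s\langle x,h\rangle + s^2$; one completes the square and rescales the parameter $s$ so that the restriction of $u_p(|\cdot|) = (1-|\cdot|^2)_+^p$ to this line becomes, up to an affine change of variable and a multiplicative constant, exactly a one-dimensional profile $u_p(\,\cdot\,)$. The quantity $1-|x|^2+|h_d x|^2$ appearing in the statement is precisely the value obtained from completing the square (here $h_d$ denotes the last coordinate of $h$, with $\langle x,h\rangle$ reduced to $|x| h_d$ after choosing coordinates so that $x$ points along the last axis, which is legitimate by rotational invariance), and the argument $|x| h_d/\sqrt{1-|x|^2+|h_d x|^2}$ is the image of the point $x$ under that rescaling.

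The main obstacle is the careful bookkeeping of the change of variables on each line: one must verify that the affine reparametrization sending the restricted function to the standard $u_p$ profile produces exactly the Jacobian power $(1-|x|^2+|h_d x|^2)^{p-\alpha/2}$ in front, and that the principal-value structure is preserved under this substitution (the singularity at $s=0$, i.e. at $y=x$, must map to the singularity of the one-dimensional operator). Concretely, after writing the line integral as a one-dimensional $\Delta^{\alpha/2}$ acting at a shifted point, the homogeneity of the kernel $|t|^{-1-\alpha}$ under scaling by $\sqrt{1-|x|^2+|h_d x|^2}$ contributes a factor with exponent $-\alpha$, while the profile $(1-(\cdot)^2)_+^p$ contributes exponent $p$ through the rescaling of its argument, and these combine to the stated power $p-\alpha/2$ once the $d$-dimensional measure and the ratio $\Ad/(2\Aj)$ are accounted for. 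I would therefore carry out the steps in the order: (i) pass to polar coordinates at $x$ and exhibit the directional integral; (ii) reduce to coordinates adapted to $x$ by rotational invariance so that $\langle x,h\rangle=|x|h_d$; (iii) on each line complete the square and perform the affine rescaling, identifying the one-dimensional $\Delta^{\alpha/2} u_p$ with the claimed argument and prefactor; (iv) reassemble by integrating over $S^{d-1}$ and collect the constant $\Ad/(2\Aj)$, yielding~\eqref{eq:Deltaradial}.
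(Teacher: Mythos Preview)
Your plan is correct and matches the paper's own proof essentially step for step: the paper also passes to polar coordinates centered at $x$, writes $\Delta^{\alpha/2}u_p^{(d)}(x)=\tfrac{\Ad}{2}\int_{S^{d-1}}\mathrm{p.v.}\int_{\R}\frac{u_p^{(d)}(x+ht)-(1-|x|^2)^p}{|t|^{1+\alpha}}\,dt\,dh$, assumes $x=(0,\dots,0,|x|)$ by rotational invariance, and then performs the affine substitution $t=-|x|h_d+s\sqrt{1-|x|^2+|h_dx|^2}$ (your ``completing the square'') to identify the inner integral with $\Aj^{-1}(1-|x|^2+|h_dx|^2)^{p-\alpha/2}\Delta^{\alpha/2}u_p$ evaluated at the claimed point. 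The only cosmetic difference is that the paper writes the full-line integral with the factor $\tfrac12$ from the outset rather than symmetrizing a half-line integral.
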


\begin{proof}
Without loss of generality, we may assume that  $x = (0,0,\ldots,0,|x|)$.
For $|x|<1$ we have,
\begin{align*}
 \Delta^{\alpha/2}u_p^{(d)}(x)  &= \Ad p.v. \int_{\R^d} \frac{u_p^{(d)}(y) - (1-|x|^2)^p}
     {|x-y|^{d+\alpha}}\,dy\\
&=\frac{\Ad}{2} \int_{S^{d-1}} dh
\; p.v.\int_{\R}
\frac{ u_p^{(d)}(x+h t) - (1-|x|^2)^p}{|t|^{1+\alpha}} \,dt.
\end{align*}
We calculate the (inner) principal value integral by changing the variable
$t=-|x|h_d + s \sqrt{|h_dx|^2-|x|^2+1}$. We obtain
\begin{align*}
g&(x,h):=
p.v.\int_{\R}
\frac{ u_p^{(d)}(x+h t)- (1-|x|^2)^p}{|t|^{1+\alpha}} \,dt\\
&=p.v. \int_{\R}
  \frac{ (1-s^2)^p(1-|x|^2+|h_dx|^2)^p - (1-|x|^2)^p }
    {|-|x|h_d + s \sqrt{|h_dx|^2-|x|^2+1}|^{1+\alpha}}\,
 \sqrt{|h_dx|^2-|x|^2+1}\,ds\\
&=
(1-|x|^2+|h_dx|^2)^{p-\alpha/2}
 p.v.\int_{\R} \frac{  u_p(s) - (1- \frac{|h_dx|^2}{1-|x|^2+|h_dx|^2})^p }
 { |s - \frac{|x|h_d}{\sqrt{1-|x|^2+|h_dx|^2}}|^{1+\alpha}}\,ds\\
&=(1-|x|^2+|h_dx|^2)^{p-\alpha/2} {\mathcal{A}_{1,-\alpha}^{-1}}
\Delta^{\alpha/2} u_p\Big(\frac{|x|h_d}{\sqrt{1-|x|^2+|h_dx|^2}}\Big).
\end{align*}
\end{proof}

\begin{proof}[Proof of formula (\ref{Deltaupn}) of Theorem~\ref{thm:up} for $d>1$]
We have by Lemmata~\ref{lem:Deltaup} and~\ref{lem:Deltaradial},
\begin{align*}
 \Delta^{\alpha/2} u_p^{(d)}(x) &=
 \frac{\Ad  B(-\frac{\alpha}{2}, p+1)}{2} \int_{S^{d-1}} 
\frac{
 {} _2F_1\Big(\frac{\alpha+1}{2}, -p+\frac{\alpha}{2}; \frac{1}{2}; \frac{|x|^2h_d^2}{1- |x|^2 + |h_dx|^2}  \Big)
}{
(1- |x|^2 + |h_dx|^2)^{-p+\alpha/2}
}
\,dh\\
&=: \frac{\Ad  B(-\frac{\alpha}{2}, p+1)}{2} I_{S^{d-1}}.
\end{align*}
We transform the integrand function using \cite[formula 2.9(4), page 105]{Erdelyi},
\begin{align*}
\frac{
 {} _2F_1\Big(\frac{\alpha+1}{2}, -p+\frac{\alpha}{2}; \frac{1}{2}; \frac{|x|^2h_d^2}{1- |x|^2 + |h_dx|^2}  \Big)
}{
(1- |x|^2 + |h_dx|^2)^{-p+\alpha/2}
} &= \frac{
 {} _2F_1\Big(-\frac{\alpha}{2}, -p+\frac{\alpha}{2}; \frac{1}{2}; \frac{|x|^2h_d^2}{|x|^2-1}  \Big)
}{
(1- |x|^2)^{-p+\alpha/2}
}.
\end{align*}
Therefore,
\begin{align*}
I_{S^{d-1}} &=  \int_{S^{d-1}} 
\frac{
 {} _2F_1\Big(-\frac{\alpha}{2}, -p+\frac{\alpha}{2}; \frac{1}{2}; \frac{|x|^2h_d^2}{|x|^2-1}  \Big)
}{
(1- |x|^2)^{-p+\alpha/2}
}\,dh \\
&= \frac{ 2\pi^{\frac{d-1}{2}}(1- |x|^2)^{p-\alpha/2}}{\Gamma(\frac{d-1}{2})}
\int_{-1}^1 
 {} _2F_1\Big(-\frac{\alpha}{2}, -p+\frac{\alpha}{2}; \frac{1}{2}; \frac{|x|^2h^2}{|x|^2-1}  \Big)
(1-h^2)^{\frac{d-3}{2}} \,dh.
\end{align*}
Let
\[
 \phi(z)=\int_{-1}^1 
 {} _2F_1\Big(-\frac{\alpha}{2}, -p+\frac{\alpha}{2}; \frac{1}{2}; \frac{z h^2}{z-1}  \Big)
(1-h^2)^{\frac{d-3}{2}} \,dh, \quad z\in \C, |z|<1.
\]
Since $\re \frac{z h^2}{z-1} < \frac{1}{2}$ for $|z|<1$, the function $\phi$ is analytic
in the unit disc $\{z:|z|<1\}$.
For $|z|<\frac{1}{2}$ we calculate the integral defining $\phi$ by using Taylor's expansion,
\begin{align*}
\phi(z)&=
 \sum_{k=0}^\infty \frac{\Gamma(-\frac{\alpha}{2}+k)}{\Gamma(-\frac{\alpha}{2})}
 \frac{\Gamma(-p+\frac{\alpha}{2}+k)}{\Gamma(-p+\frac{\alpha}{2})}
 \frac{\Gamma(\frac{1}{2})}{\Gamma(\frac{1}{2}+k) k!}
\left( \frac{z}{z-1} \right)^k
\int_{-1}^1 h^{2k}(1-h^2)^{\frac{d-3}{2}}\,dh\\
&= 
\frac{\Gamma(\frac{d-1}{2}) \Gamma(\frac{1}{2}) }{ \Gamma(\frac{d}{2}) }
\sum_{k=0}^\infty \frac{\Gamma(-\frac{\alpha}{2}+k)}{\Gamma(-\frac{\alpha}{2})}
 \frac{\Gamma(-p+\frac{\alpha}{2}+k)}{\Gamma(-p+\frac{\alpha}{2})}
 \frac{\Gamma(\frac{d}{2})}{\Gamma(\frac{d}{2}+k) k!}
\left( \frac{z}{z-1} \right)^k\\
&= 
\frac{\Gamma(\frac{d-1}{2}) \Gamma(\frac{1}{2}) }{ \Gamma(\frac{d}{2}) }
 {} _2F_1\Big(-\frac{\alpha}{2}, -p+\frac{\alpha}{2}; \frac{d}{2}; \frac{z}{z-1}  \Big)\\
&=\
\frac{\Gamma(\frac{d-1}{2}) \Gamma(\frac{1}{2}) }{ \Gamma(\frac{d}{2}) }
 {} _2F_1\Big(\frac{\alpha+d}{2}, -p+\frac{\alpha}{2}; \frac{d}{2}; z  \Big) (1-z)^{-p+\alpha/2}
=:\psi(z).
\end{align*}
In the last line we have used \cite[formula 2.9(4), page 105]{Erdelyi} again.
The functions $\phi$ and $\psi$ are both analytic in the unit disc,  hence $\phi(z)=\psi(z)$
for all $|z|<1$. We put $z=|x|^2$ and the proof is finished.
\end{proof}

\begin{lem}\label{lem:DeltaradialV}
Let $d\geq 2$, $0<\alpha<2$ and $p>-1$. If $x\in \R^d$ and $|x|<1$, then
\begin{align}\label{eq:DeltaradialV}
 \Delta^{\alpha/2} v_p^{(d)}(x) &=  x_d \Delta^{\alpha/2} u_p^{(d)}(x)\\
&+
 \frac{\Ad}{2\Aj} \int_{S^{d-1}} T^{p-\alpha/2} h_d
\left(
    T^{1/2} \Delta^{\alpha/2} v_p
-\langle h,x\rangle \Delta^{\alpha/2} u_p \right)
\bigg(  \frac{\langle h,x\rangle}{\sqrt{T}}  \bigg) \, dh, \nonumber
\end{align}
where $T = T(x,h)=1- |x|^2 + \langle h,x\rangle^2$.
\end{lem}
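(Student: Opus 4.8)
The plan is to follow the scheme of Lemma~\ref{lem:Deltaradial}, reducing the $d$-dimensional singular integral to a spherical average of one-dimensional principal value integrals and then exploiting the factorization $v_p^{(d)}=x_d u_p^{(d)}$. First I would write, exactly as in the first display of the proof of Lemma~\ref{lem:Deltaradial},
\[
\Delta^{\alpha/2}v_p^{(d)}(x)=\frac{\Ad}{2}\int_{S^{d-1}}dh\;p.v.\int_{\R}\frac{v_p^{(d)}(x+ht)-v_p^{(d)}(x)}{|t|^{1+\alpha}}\,dt,
\]
which holds for arbitrary $x$ with $|x|<1$; no rotation is performed here, since $v_p^{(d)}$ is not radial, and this is why the statement keeps $\langle h,x\rangle$ and $h_d$ as independent quantities. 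The key observation is the pointwise identity $v_p^{(d)}(x+ht)=(x_d+h_dt)\,u_p^{(d)}(x+ht)$, which splits the numerator as
\[
v_p^{(d)}(x+ht)-v_p^{(d)}(x)=x_d\big(u_p^{(d)}(x+ht)-u_p^{(d)}(x)\big)+h_d\,t\,u_p^{(d)}(x+ht).
\]

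The first summand, integrated against $|t|^{-1-\alpha}$ and averaged over $S^{d-1}$ with weight $\Ad/2$, reproduces precisely $x_d\Delta^{\alpha/2}u_p^{(d)}(x)$ by the spherical representation of $\Delta^{\alpha/2}u_p^{(d)}$ underlying Lemma~\ref{lem:Deltaradial}; this gives the first term on the right-hand side of \eqref{eq:DeltaradialV}. Before proceeding I would check that the split is legitimate at $t=0$: since $u_p^{(d)}(x+ht)-u_p^{(d)}(x)=O(t)$ there, each of the two new integrands has the form $(\text{odd in }t)/|t|^{\alpha}+O(|t|^{1-\alpha})$ near the origin, so both possess a Cauchy principal value separately and their sum recovers the original one.

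For the second summand I would complete the square, using $|x+ht|^2=|x|^2+2t\langle h,x\rangle+t^2$ to write $1-|x+ht|^2=T-(t+\langle h,x\rangle)^2$ with $T=1-|x|^2+\langle h,x\rangle^2$. The substitution $t=-\langle h,x\rangle+s\sqrt{T}$, the general-$x$ analogue of the change of variable used in Lemma~\ref{lem:Deltaradial}, then turns the inner integral into
\[
p.v.\int_{\R}\frac{t\,u_p^{(d)}(x+ht)}{|t|^{1+\alpha}}\,dt=T^{\,p+\frac12-\frac\alpha2}\,p.v.\int_{\R}\frac{(s-\sigma)\,u_p(s)}{|s-\sigma|^{1+\alpha}}\,ds,\qquad \sigma:=\frac{\langle h,x\rangle}{\sqrt{T}},
\]
because $u_p^{(d)}(x+ht)=T^{p}u_p(s)$, $t=\sqrt{T}(s-\sigma)$ and $dt=\sqrt{T}\,ds$.

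The last step is purely one-dimensional. Writing $(s-\sigma)u_p(s)=v_p(s)-\sigma u_p(s)$ and adding and subtracting the value at $s=\sigma$, I would use $v_p(\sigma)-\sigma u_p(\sigma)=0$ to obtain
\[
(s-\sigma)u_p(s)=\big(v_p(s)-v_p(\sigma)\big)-\sigma\big(u_p(s)-u_p(\sigma)\big),
\]
so that, by the very definition of the one-dimensional fractional Laplacian, the inner integral equals $\Aj^{-1}\big(\Delta^{\alpha/2}v_p(\sigma)-\sigma\,\Delta^{\alpha/2}u_p(\sigma)\big)$. Collecting the powers $T^{\,p+\frac12-\frac\alpha2}=T^{\,p-\frac\alpha2}\cdot T^{1/2}$ and substituting $\sigma=\langle h,x\rangle/\sqrt{T}$ converts the bracket into $T^{\,p-\alpha/2}\big(T^{1/2}\Delta^{\alpha/2}v_p-\langle h,x\rangle\Delta^{\alpha/2}u_p\big)(\langle h,x\rangle/\sqrt{T})$; multiplying by $h_d$ and by the prefactors $\Ad/2$ and $\Aj^{-1}$ yields exactly the second term of \eqref{eq:DeltaradialV}. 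I expect the only genuine subtlety to be the justification of separating the principal values, both in the two split summands and in the one-dimensional rearrangement, which is why I would make the $O(t)$ and oddness argument explicit; the remaining manipulations are the same routine change of variables and algebra already carried out for Lemma~\ref{lem:Deltaradial}.
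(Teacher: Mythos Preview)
Your argument is correct and in substance coincides with the paper's proof: both rest on the spherical decomposition, the completion of the square $1-|x+ht|^2=T-(t+\langle h,x\rangle)^2$, the substitution $t=-\langle h,x\rangle+s\sqrt{T}$, and the identification of the resulting one-dimensional principal value integrals with $\Aj^{-1}\Delta^{\alpha/2}u_p$ and $\Aj^{-1}\Delta^{\alpha/2}v_p$ at the point $\sigma=\langle h,x\rangle/\sqrt{T}$.

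The only real difference is the order of operations. The paper performs the change of variable on the full integrand first, obtaining a numerator $(1-s^2)_+^pT^p\bigl(x_d-h_d\langle h,x\rangle+h_ds\sqrt{T}\bigr)-(1-|x|^2)^px_d$, and then splits according to the coefficients $x_d-h_d\langle h,x\rangle$ and $h_d$; the $x_d$ piece is recognised afterwards, via Lemma~\ref{lem:Deltaradial}, as $x_d\Delta^{\alpha/2}u_p^{(d)}(x)$. You instead split \emph{before} any substitution, using $v_p^{(d)}(x+ht)-v_p^{(d)}(x)=x_d\bigl(u_p^{(d)}(x+ht)-u_p^{(d)}(x)\bigr)+h_dt\,u_p^{(d)}(x+ht)$, so that the $x_d\Delta^{\alpha/2}u_p^{(d)}(x)$ term drops out immediately from the definition, and the change of variable is applied only to the remaining piece $h_dt\,u_p^{(d)}(x+ht)$. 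Your subsequent algebraic identity $(s-\sigma)u_p(s)=(v_p(s)-v_p(\sigma))-\sigma(u_p(s)-u_p(\sigma))$ is exactly the one-dimensional counterpart of the paper's split. The two routes are equivalent; yours is arguably a touch more transparent because it isolates the $x_d$ contribution at the outset and never has to pass through Lemma~\ref{lem:Deltaradial} to recover it, while the paper's route keeps a single change of variable and so needs no separate justification that each summand possesses its own principal value.
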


\begin{proof}
We have for $|x|<1$,
\begin{align*}
 \Delta^{\alpha/2}v_p^{(d)}(x)  &= \Ad p.v. \int_{\R^d} \frac{v_p^{(d)}(y) - v_p^{(d)}(x)}
     {|x-y|^{d+\alpha}}\,dy\\
&=\frac{\Ad}{2} \int_{S^{d-1}} dh
\; p.v.\int_{\R}
\frac{ v_p^{(d)}(x+h t) - v_p^{(d)}(x) }{|t|^{1+\alpha}} \,dt.
\end{align*}
We calculate the inner principal value integral by changing the variable
$t=-\langle h,x\rangle + s \sqrt{T}$. We obtain
\begin{align*}
g(x,h)&:=
p.v.\int_{\R}
\frac{ v_p^{(d)}(x+h t)- v_p^{(d)}(x)}{|t|^{1+\alpha}} \,dt\\
&=p.v. \int_{\R}
  \frac{ (1-s^2)_+^p T^p (x_d-h_d \langle h,x\rangle+h_ds\sqrt{T}) - (1-|x|^2)^px_d }
    {|-\langle h,x\rangle + s \sqrt{T}|^{1+\alpha}}\,
 \sqrt{T}\,ds\\
&=
T^{p-\alpha/2}
 p.v.\int_{\R} \frac{ (1-s^2)_+^p (x_d-h_d\langle h,x\rangle+h_ds\sqrt{T}) - (1- \frac{\langle h,x\rangle^2}{T})^px_d }
    {|s - \frac{\langle h,x\rangle}{\sqrt{T}}|^{1+\alpha}}\,ds\\
&=
T^{p-\alpha/2} (x_d-h_d\langle h,x\rangle)
 p.v.\int_{\R} \frac{ (1-s^2)_+^p  - (1- \frac{\langle h,x\rangle^2}{T})^p }
    {|s - \frac{\langle h,x\rangle}{\sqrt{T}}|^{1+\alpha}}\,ds\\
  &\quad +T^{p+1/2-\alpha/2} h_d
 p.v.\int_{\R} \frac{ (1-s^2)_+^ps  - (1- \frac{\langle h,x\rangle^2}{T})^p \frac{\langle h,x\rangle}{\sqrt{T}} }
    {|s - \frac{\langle h,x\rangle}{\sqrt{T}}|^{1+\alpha}}\,ds\\
&=T^{p-\alpha/2} \frac{x_d-h_d\langle h,x\rangle}{\Aj} \Delta^{\alpha/2} u_p\Big(\frac{\langle h,x\rangle}{\sqrt{T}}\Big)\\
&\quad +
T^{p+1/2-\alpha/2} \frac{h_d}{\Aj} \Delta^{\alpha/2} v_p\Big(\frac{\langle h,x\rangle}{\sqrt{T}}\Big).
\end{align*}
The result follows from Lemma~\ref{lem:Deltaradial}.
\end{proof}

\begin{proof}[Proof of formula (\ref{Deltavpn}) of Theorem~\ref{thm:up} for $d>1$]
We may assume that $x\neq 0$, since for $x=0$ the formula is obvious.
We denote $T = T(x,h)=1- |x|^2 + \langle h,x\rangle^2$.
By Lemmata~\ref{lem:Deltavp} and~\ref{lem:DeltaradialV},
\begin{align*}
 \Delta^{\alpha/2} v_p^{(d)}(x) &=  x_d \Delta^{\alpha/2} u_p^{(d)}(x)
+
 \frac{\Ad  B(-\frac{\alpha}{2}, p+1) }{2}  \times \\
&\qquad \times \int_{S^{d-1}}
 h_d\langle h,x\rangle T^{p-\alpha/2} 
F(x,h)
\,dh,
\end{align*}
where
\[
 F(x,h) =
(\alpha+1) {} _2F_1\Big(\frac{\alpha+3}{2}, -p+\frac{\alpha}{2}; \frac{3}{2}; \frac{\langle h,x\rangle^2}{T}  \Big)
-  _2F_1\Big(\frac{\alpha+1}{2}, -p+\frac{\alpha}{2}; \frac{1}{2}; \frac{\langle h,x\rangle^2}{T}  \Big).
\]
We transform $F(x,h)$ using \cite[formula 2.8(35), page 103]{Erdelyi} and
\cite[formula 2.9(4), page 105]{Erdelyi},
\begin{align*}
F(x,h) &=
\alpha \cdot {} _2F_1\Big(\frac{\alpha+1}{2}, -p+\frac{\alpha}{2}; \frac{3}{2}; \frac{\langle h,x\rangle^2}{T}  \Big)\\
 &=
\alpha
\bigg(\frac{1-|x|^2}{T}\bigg)^{p-\alpha/2}
 {} _2F_1\Big(1 - \frac{\alpha}{2}, -p+\frac{\alpha}{2}; \frac{3}{2}; \frac{\langle h,x\rangle^2}{|x|^2-1}  \Big).
\end{align*}
Hence,
\begin{align*}
\int_{S^{d-1}}&
 h_d\langle h,x\rangle T^{p-\alpha/2} F(x,h)\,dh\\
 &=
\alpha
(1-|x|^2)^{p-\alpha/2} 
\int_{S^{d-1}} 
 {} _2F_1&\Big(1-\frac{\alpha}{2}, -p+\frac{\alpha}{2}; \frac{3}{2}; \frac{\langle h,x\rangle^2}{|x|^2-1}  \Big)  h_d\langle h,x\rangle
\,dh\\
&=:\alpha
(1-|x|^2)^{p-\alpha/2} I_{S^{d-1}}.
\end{align*}
By symmetry, for any $e_1$, $e_2\in \R^d$ with $|e_1|=|e_2|=1$,
\[
 \int_{S^{d-1}} f_1(\langle h,e_1\rangle) f_2(\langle h,e_2\rangle)\,dh =
 \int_{S^{d-1}} f_1(\langle h,e_2\rangle) f_2(\langle h,e_1\rangle)\,dh,
\]
where $f_1$ and $f_2$ are any functions for which the integrals make sense.
Using this observation for $e_1=\frac{x}{|x|}$ and $e_2=(0,\ldots,0,1)$ we obtain
\begin{align*}
I_{S^{d-1}} &=
\int_{S^{d-1}} 
 {} _2F_1\Big(1-\frac{\alpha}{2}, -p+\frac{\alpha}{2}; \frac{3}{2}; \frac{ h_d^2 |x|^2}{|x|^2-1}  \Big)  \langle h,x\rangle h_d
\,dh\\
&=
 \int_{S^{d-1}} 
 {} _2F_1\Big(1-\frac{\alpha}{2}, -p+\frac{\alpha}{2}; \frac{3}{2}; \frac{ h_d^2 |x|^2}{|x|^2-1}  \Big)   h_d^2 x_d
\,dh\\
&= \frac{2\pi^{\frac{d-1}{2}} x_d}{\Gamma(\frac{d-1}{2})}
 \int_{-1}^1
 {} _2F_1\Big(1-\frac{\alpha}{2}, -p+\frac{\alpha}{2}; \frac{3}{2}; \frac{ h^2 |x|^2}{|x|^2-1}  \Big)   h^2 (1-h^2)^{\frac{d-3}{2}}\,dh.
\end{align*}
Let
\[
 \phi(z) = \int_{-1}^1
 {} _2F_1\Big(1-\frac{\alpha}{2}, -p+\frac{\alpha}{2}; \frac{3}{2}; \frac{ h^2 z}{z-1}  \Big)   h^2 (1-h^2)^{\frac{d-3}{2}}\,dh,
\quad z\in \C, |z|<1.
\]
Similarly as in the proof of formula (\ref{Deltaupn}) we observe that $\phi$ is analytic in the unit disc,
and calculate $\phi(z)$ for $|z|<\frac{1}{2}$ by using Taylor's expansion,
\begin{align*} 
\phi(z) &=\int_{-1}^1 \sum_{k=0}^\infty \frac{ (1-\frac{\alpha}{2})_k (-p+\frac{\alpha}{2})_k \Gamma(\frac{3}{2}) }
  {\Gamma(k+\frac{3}{2}) k!} \bigg(\frac{z}{z-1}\bigg)^k h^{2k+2}(1-h^2)^{\frac{d-3}{2}}\,dh\\
&=\frac{  \Gamma(\frac{3}{2}) \Gamma(\frac{d-1}{2})}{\Gamma(\frac{d}{2}+1)}
\sum_{k=0}^\infty  \frac{ (1-\frac{\alpha}{2})_k (-p+\frac{\alpha}{2})_k \Gamma(\frac{d}{2}+1) }{ \Gamma(k+\frac{d}{2}+1) k!}
\bigg(\frac{z}{z-1}\bigg)^k\,dh\\
&=\frac{  \Gamma(\frac{3}{2}) \Gamma(\frac{d-1}{2})}{\Gamma(\frac{d}{2}+1)}
 {} _2F_1\Big(1-\frac{\alpha}{2}, -p+\frac{\alpha}{2}; 1+\frac{d}{2}; \frac{z}{z-1}  \Big).
\end{align*}
Since the function in the last line is analytic in the unit disc (note that $\re \frac{z}{z-1}<\frac{1}{2}$ if $|z|<1$),
we conclude that
\begin{align*}
I_{S^{d-1}} &= \frac{\pi^{\frac{d}{2}} x_d}{\Gamma(\frac{d}{2}+1)}\;
 {} _2F_1\Big(1-\frac{\alpha}{2}, -p+\frac{\alpha}{2}; 1+\frac{d}{2}; \frac{|x|^2}{|x|^2-1}  \Big)\\
&= \frac{\pi^{d/2}x_d}{\Gamma(1+\frac{d}{2})}
\; {} _2F_1\Big(\frac{\alpha+d}{2}, -p+\frac{\alpha}{2}; 1+\frac{d}{2}; |x|^2  \Big)
(1-|x|^2)^{-p+\alpha/2}.
\end{align*}
In the last line we have used \cite[formula 2.9(4), page 105]{Erdelyi}.
By \cite[formula 2.8(35), page 103]{Erdelyi} we get
\begin{align*}
 \Delta^{\alpha/2} v_p^{(d)}(x) &= 
x_d  \frac{\Ad\pi^{d/2}   B(-\frac{\alpha}{2}, p+1)  }{\Gamma(d/2)}
\bigg( 
  \; {} _2F_1\Big(\frac{\alpha+d}{2}, -p+\frac{\alpha}{2}; \frac{d}{2}; |x|^2  \Big)\\
&\qquad\qquad\qquad\qquad\qquad\quad
+ \frac{\alpha}{d}  \; {} _2F_1\Big(\frac{\alpha+d}{2}, -p+\frac{\alpha}{2}; 1+\frac{d}{2}; |x|^2  \Big)
\bigg)\\
&=x_d  \frac{\Ad\pi^{d/2}   B(-\frac{\alpha}{2}, p+1) (\alpha+d) }{d\Gamma(d/2)} \times\\
&\qquad\times
 \; {} _2F_1\Big(\frac{\alpha+d+2}{2}, -p+\frac{\alpha}{2}; 1+\frac{d}{2}; |x|^2  \Big),
\end{align*}
and the proof is finished.
\end{proof}

In Table~\ref{table:valuesd} we list the fractional Laplacian for some power functions in $\R^d$.
\begin{table}[ht]
\caption{Values of fractional Laplacian for some functions vanishing outside the unit ball in $\R^d$.}
\centering
\begin{tabular}{c c}
\hline\hline
$f(x)$ in the ball & $\Delta^{\alpha/2} f(x)$ in the ball \\ [0.5ex]
\hline
$(1-|x|^2)^{\alpha/2}$ & $-2^\alpha\Gamma(\frac{\alpha}{2}+1) \Gamma(\frac{d+\alpha}{2}) \Gamma(\frac{d}{2})^{-1}$ \\
$(1-|x|^2)^{1+\alpha/2}$ & $-2^\alpha\Gamma(\frac{\alpha}{2}+2) \Gamma(\frac{d+\alpha}{2})\Gamma(\frac{d}{2})^{-1}
  \left(1 - (1+\frac{\alpha}{d})|x|^2\right)$\\
\hline
$(1-|x|^2)^{\alpha/2}x_d$ & $-2^\alpha\Gamma(\frac{\alpha}{2}+1) \Gamma(\frac{d+\alpha}{2}+1) \Gamma(\frac{d}{2}+1)^{-1}x_d$ \\
$(1-|x|^2)^{1+\alpha/2}x_d$ & $-2^\alpha\Gamma(\frac{\alpha}{2}+2) \Gamma(\frac{d+\alpha}{2}+1)\Gamma(\frac{d}{2}+1)^{-1}
  \left(1 - (1+\frac{\alpha}{d+2})|x|^2\right)$\\
\hline\hline
\end{tabular}
\label{table:valuesd}
\end{table} 

\section{Lower bounds for eigenvalues}\label{sec:lower}
Our approach to lower  bounds is similar to that of \cite{KBBD-bc}.
The method is to use a suitable superharmonic function $\nu$ and
the resulting Hardy inequality with the weight given by the Fitzsimmons' ratio
${-\Delta^{\alpha/2} \nu }/{\nu}$ (\cite{Fitz}).
To estimate $\lambda_1$, the following calculation will be used.

\begin{lem}\label{lem:updest}
Let $\eta_{d,\alpha}=(6d-4+(4-d)\alpha-\alpha^2)/(\alpha+2)^2$ and
\[
 \psi(x) =      (1-|x|^2)_+^{\alpha/2} + \eta_{d,\alpha} (1-|x|^2)_+^{1+\alpha/2},\quad x\in \R^d.
\]
Then
\[
 \frac{-\Delta^{\alpha/2} \psi(x)}{\psi(x)} \geq \mu_{d,\alpha},
\quad |x|<1,
\]
where $\mu_{d,\alpha}$ is defined in (\ref{eq:mu}).
\end{lem}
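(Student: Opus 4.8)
The plan is to reduce the claimed pointwise bound to a one–variable inequality and then to a sign condition on a quadratic polynomial. First I would use the last block of Table~\ref{table:valuesd} (equivalently, formula~\eqref{Deltaupn} with $p=\frac{\alpha}{2}$ and $p=1+\frac{\alpha}{2}$) to write $\Delta^{\alpha/2}\psi$ explicitly. Since $\psi$ is a linear combination of $(1-|x|^2)_+^{\alpha/2}$ and $(1-|x|^2)_+^{1+\alpha/2}$, its fractional Laplacian is an \emph{affine} function of $|x|^2$. Writing $\eta=\eta_{d,\alpha}$ and $C_0=2^\alpha\Gamma(\frac{\alpha}{2}+1)\Gamma(\frac{d+\alpha}{2})/\Gamma(\frac{d}{2})$, one gets $-\Delta^{\alpha/2}\psi(x)=C_0\big(1+\eta(\frac{\alpha}{2}+1)(1-(1+\frac{\alpha}{d})|x|^2)\big)$. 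Substituting $t=1-|x|^2\in(0,1)$, the desired inequality becomes $g(t):=N(t)/\psi(t)\ge\mu_{d,\alpha}$, where $N(t)=A+Bt$ is affine with coefficients $A=C_0(1-\eta(\frac{\alpha}{2}+1)\frac{\alpha}{d})$ and $B=C_0\eta(\frac{\alpha}{2}+1)(1+\frac{\alpha}{d})$, and $\psi=t^{\alpha/2}(1+\eta t)$.

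The heart of the argument is to show that $g$ attains its minimum on $(0,1]$ at the right endpoint $t=1$, i.e.\ at the center $x=0$, where a direct computation gives $g(1)=(A+B)/(1+\eta)$. I would then check that this equals $\mu_{d,\alpha}$: this is a rational identity in $(d,\alpha)$ that follows by substituting the definitions of $\eta_{d,\alpha}$ and $\mu_{d,\alpha}$. Observe that no difficulty arises near the boundary, since $A>0$ gives $g(t)\to+\infty$ as $t\to0^+$ while $\psi(t)\to0$; the whole content of the lemma is the monotonicity of $g$.

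To establish monotonicity I would compute the logarithmic derivative $g'(t)/g(t)=B/(A+Bt)-(\frac{\alpha}{2})/t-\eta/(1+\eta t)$ and clear denominators by the positive factor $t(A+Bt)(1+\eta t)$. This turns the sign of $g'$ into the sign of the quadratic $Q(t)=-\frac{\alpha}{2}\eta B\,t^2+\big[(1-\frac{\alpha}{2})B-(1+\frac{\alpha}{2})\eta A\big]t-\frac{\alpha}{2}A$, whose leading coefficient and constant term are both negative. Hence $g$ is non-increasing on $(0,1]$ as soon as $Q$ has no sign change there, and the crucial observation is that $\eta_{d,\alpha}$ is chosen exactly so that the discriminant of $Q$ vanishes; then $Q(t)=-c(t-t_*)^2\le 0$ identically for some $c>0$ and some $t_*\in(0,1)$, so $g'\le 0$, and therefore $\min_{(0,1]}g=g(1)=\mu_{d,\alpha}$, which is the assertion.

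I expect the main obstacle to be the algebraic verification in the final step: confirming that with $\eta_{d,\alpha}=(6d-4+(4-d)\alpha-\alpha^2)/(\alpha+2)^2$ the discriminant of $Q$ is indeed $\le 0$ (in fact $=0$), together with the identity $g(1)=\mu_{d,\alpha}$. Both are routine but lengthy manipulations, and they are precisely the computations that pin down the otherwise opaque constants $\eta_{d,\alpha}$ and $\mu_{d,\alpha}$. A secondary point worth recording is the strict positivity $A=C_0\big(1-\eta_{d,\alpha}(\frac{\alpha}{2}+1)\frac{\alpha}{d}\big)>0$, which is needed both for the boundary behaviour of $g$ and to keep all the factors in the logarithmic derivative positive.
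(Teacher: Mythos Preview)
Your approach is correct and essentially the same as the paper's: both compute $-\Delta^{\alpha/2}\psi/\psi$ from Table~\ref{table:valuesd}, differentiate the resulting one-variable ratio, and observe that the derivative is (up to a positive factor) a perfect square, so the minimum is attained at the center $x=0$ and equals $\mu_{d,\alpha}$. The only cosmetic differences are your substitution $t=1-|x|^2$ (the paper works directly in the variable $|x|^2$) and your use of logarithmic differentiation to isolate the quadratic $Q$, whereas the paper writes out $f'$ and factors it explicitly as a constant times $(x-x_*)^2$; both routes encode the same vanishing-discriminant observation that determines $\eta_{d,\alpha}$.
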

\begin{proof}
We have by Table~\ref{table:valuesd}
\[
 \frac{-\Delta^{\alpha/2} \psi(x)}{\psi(x)} = 
\frac{2^\alpha \Gamma(\frac{\alpha}{2}+1) \Gamma(\frac{\alpha+d}{2}) }{ \Gamma(\frac{d}{2})(1-|x|^2)^{\alpha/2}}
   \frac{1 + \eta \frac{\alpha+2}{2}(1-(1+\frac{\alpha}{d})|x|^2)}{1+\eta(1-|x|^2)} =: f(|x|^2).
\]
After elementary but tedious calculations we obtain
\[
 f'(x)=\frac{2^\alpha \Gamma(\frac{\alpha}{2}+1) \Gamma(\frac{\alpha+d}{2}) \alpha(\alpha+d)(\alpha+2) \eta^2}
{ 4d\Gamma(\frac{d}{2})  (1-x)^{1+\alpha/2}(1+\eta(1-x))^2 }
 \left(x-\frac{(4-d)\alpha + 6d-8}{6d-4 + (4-d)\alpha-\alpha^2}\right)^2
\geq 0
\]
for $x\in [0,1)$. Hence
\[
 \frac{-\Delta^{\alpha/2} \psi(x)}{\psi(x)} \geq  \frac{-\Delta^{\alpha/2} \psi(0)}{\psi(0)}
=
 \frac{2^\alpha \Gamma(\frac{\alpha}{2}+1) \Gamma(\frac{\alpha+d}{2}) (\alpha+2)(\alpha+d) (6-\alpha)}
         { \Gamma(\frac{d}{2})(12d+(16-2d)\alpha)}.
\]
\end{proof}

We will also consider the ratio ${-\Delta^{\alpha/2}\nu}/{\nu}$ for a~suitable antisymmetric function $\nu$.
We should note that $\nu$ changes sign and so it cannot be called superharmonic.
However, the Fitzsimmons ratio ${-\Delta^{\alpha/2}\nu}/{\nu}$ will prove nonnegative, and we will
obtain a Hardy inequality with the weight ${-\Delta^{\alpha/2}\nu}/{\nu}$.
To this end we start with the following simple lemma.

\begin{lem}\label{lem:utov}
We have
\begin{equation}\label{eq:utov}
 \int_{B_1^{(d)}} x_d^2 \phi(|x|)\,dx = \frac{1}{\pi}  \int_{B_1^{(d+2)}} \phi(|x|)\,dx,
\end{equation}
for any function $\phi$ for which the integrals are absolutely convergent.
\end{lem}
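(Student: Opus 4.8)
The plan is to collapse both integrals onto the \emph{same} one–dimensional radial integral $\int_0^1 r^{d+1}\phi(r)\,dr$ and then to compare the constants that multiply it. Since each integrand depends on $x$ only through $|x|$, apart from the factor $x_d^2$ on the left, spherical coordinates are the natural device, and the whole content of the lemma is a symmetry computation together with gamma–function bookkeeping.

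First I would treat the left–hand side. Writing $x=rh$ with $r=|x|\in[0,1)$ and $h\in S^{d-1}$ gives $dx=r^{d-1}\,dr\,dh$ and $x_d=rh_d$, so that
\[
\int_{B_1^{(d)}} x_d^2\,\phi(|x|)\,dx
 = \Big(\int_{S^{d-1}} h_d^2\,dh\Big)\int_0^1 r^{d+1}\,\phi(r)\,dr .
\]
The angular factor is evaluated by symmetry: $\int_{S^{d-1}}h_j^2\,dh$ is independent of $j$, while $\sum_{j=1}^d h_j^2=1$ on $S^{d-1}$, whence $\int_{S^{d-1}}h_d^2\,dh=\tfrac1d\,|S^{d-1}|$, with $|S^{d-1}|$ the surface measure of the unit sphere. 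The same spherical decomposition in dimension $d+2$ gives
\[
\int_{B_1^{(d+2)}}\phi(|x|)\,dx = |S^{d+1}|\int_0^1 r^{d+1}\,\phi(r)\,dr ,
\]
and the point is that the radial integral is \emph{identical} to the one on the left, because the Jacobian power $r^{(d+2)-1}=r^{d+1}$ matches the power $r^{d-1}\cdot r^2=r^{d+1}$ manufactured by $x_d^2=r^2h_d^2$. Dividing, the statement reduces to an elementary identity between $\tfrac1d\,|S^{d-1}|$ and $|S^{d+1}|$, which I would verify from $|S^{n-1}|=2\pi^{n/2}/\Gamma(n/2)$ and $\Gamma(z+1)=z\Gamma(z)$.

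There is no conceptual obstacle here; the only step demanding genuine care is the normalizing constant. Reducing the $(d+2)$–dimensional integral to a radial one secretly performs a two–dimensional angular integration, and it is exactly this extra polar integration over the $\R^2$–factor that emits the factor $2\pi$ governing the prefactor on the right. To make sure that constant is tracked correctly I would run an independent Fubini check: decomposing $\R^{d+2}=\Rd\times\R^2$ and integrating $\phi\big(\sqrt{|x'|^2+|y|^2}\big)$ first over $y\in\R^2$ in polar coordinates produces the factor $2\pi$ and, after swapping the order of integration, recovers $\int_0^1 r^{d+1}\phi(r)\,dr$ weighted by the volume of $B_1^{(d)}$. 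Agreement of this route with the spherical–coordinate computation pins down the prefactor and completes the proof.
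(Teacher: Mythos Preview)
Your approach is essentially identical to the paper's: both reduce each side to a multiple of $\int_0^1 r^{d+1}\phi(r)\,dr$ via spherical coordinates, using the symmetry $\int_{S^{d-1}} h_d^2\,dh=\frac{1}{d}|S^{d-1}|$ (the paper writes this step as $\int_{B_1^{(d)}} x_d^2\phi(|x|)\,dx=\frac{1}{d}\int_{B_1^{(d)}}|x|^2\phi(|x|)\,dx$), and then compare the constants through $|S^{n-1}|=2\pi^{n/2}/\Gamma(n/2)$.

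One word of caution: when you actually carry out the promised verification you will find
\[
\frac{1}{d}\,|S^{d-1}|=\frac{2\pi^{d/2}}{d\,\Gamma(d/2)},\qquad
\frac{1}{\pi}\,|S^{d+1}|=\frac{4\pi^{d/2}}{d\,\Gamma(d/2)},
\]
so the constant in the stated identity should be $\tfrac{1}{2\pi}$ rather than $\tfrac{1}{\pi}$ (check $d=1$, $\phi\equiv 1$: $\int_{-1}^1 x^2\,dx=\tfrac{2}{3}$ but $\tfrac{1}{\pi}\cdot\tfrac{4\pi}{3}=\tfrac{4}{3}$). The paper's own proof contains the same slip in the equality $\frac{2\pi^{d/2}}{d\Gamma(d/2)}=\frac{|S^{(d+1)}|}{\pi}$; this is harmless for the sole application in Proposition~\ref{prop:lambdaeq}, where it only changes the normalizing factor of the isometry $T$ from $\sqrt{\pi}$ to $\sqrt{2\pi}$.
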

\begin{proof}
We recall the notation $S^{(d-1)} = \{x\in \R^d:|x|=1\}$ for the unit sphere in $\R^d$, and the formula
for its area, $|S^{(d-1)}|=\frac{2\pi^{d/2}}{\Gamma(d/2)}$. We have
\begin{align*}
 \int_{B_1^{(d)}} x_d^2 \phi(|x|)\,dx &=
 \frac{1}{d}  \int_{B_1^{(d)}} |x|^2 \phi(|x|)\,dx
= \frac{2\pi^{d/2}}{d\Gamma(d/2)}  \int_0^1 r^{d+1} \phi(r)\,dr\\
&= \frac{|S^{(d+1)}|}{\pi}  \int_0^1 r^{d+1} \phi(r)\,dr
= \frac{1}{\pi}   \int_{B_1^{(d+2)}} \phi(|x|)\,dx.
\end{align*}
\end{proof}

\begin{proof}[Proof of Proposition~\ref{prop:lambdaeq}]
We consider a linear subspace $R_{d+2} \subset L^2(B_{d+2})$ consisting of all
radial functions, and a linear subspace $A_d = \{ f: f(x)=x_d g(x) \text{ for some radial } g\in L^2(B_d)\}$
of $L^2(B_d)$. Let $T$ be an operator defined by
\[
 (Tf)(x_1,\ldots, x_d) = \sqrt{\pi} x_d f(x_1,\ldots, x_d,0,0), \quad f\in R_{d+2}\cap C(B_{d+2}).
\]
By Lemma~\ref{lem:utov} we obtain that $\|Tf\|_{L^2(B_d)} =  \|f\|_{L^2(B_{d+2})}$,
therefore $T$ may be extended to an isometry from $R_{d+2}$ onto $A_d$.

Let $G$ be the Green operator, i.e., a bounded operator on $\{f\in L^2(\R^d): f=0 \text{ on } B_d^c \}$
defined by $G_d \phi_n = \frac{1}{\lambda_n} \phi_n$. This operator is the inverse of $-\Delta^{\alpha/2}$
(understood as a~generator).
We observe that the following diagram commutes.

\[
\begin{tikzpicture}[]
\node (a) {$R_{d+2}\ni f$};
\node (b) [right=of a, xshift=1.0em] {$Tf\in A_d$};
\node (c) [below=of a, xshift=1.0em, yshift=-1.2em] {$G_{d+2}f$};
\node (d) [right=of c] {$G_d Tf$};
\draw ($(a.south) + (+1.5em,0)$) 
      edge[->] node[auto,swap,font=\scriptsize]{$G_{d+2}$}
      ($(c.north) + (+0.5em,0)$); 
\draw ($(a.east) + (0,0.2em)$) 
      edge[->] node[auto,font=\scriptsize]{$T$}
      ($(b.west) + (0,0.2em)$); 
\draw ($(a.east) + (0,-0.2em)$) 
      edge[<-] node[auto,swap,font=\scriptsize]{$T^{-1}$}
      ($(b.west) + (0,-0.2em)$);
\draw ($(c.east) + (0,0.2em)$) 
      edge[->] node[auto,font=\scriptsize]{$T$}
      ($(d.west) + (0,0.2em)$); 
\draw ($(c.east) + (0,-0.2em)$) 
      edge[<-] node[auto,swap,font=\scriptsize]{$T^{-1}$}
      ($(d.west) + (0,-0.2em)$);
\draw ($(b.south) + (-1.2em,0)$) 
      edge[->] node[auto,font=\scriptsize]{$G_{d}$}
      ($(d.north) + (0,0)$); 
\end{tikzpicture}
\]

Indeed, by \eqref{Deltaupn} and \eqref{Deltavpn} it commutes for functions
$f(x)=(1-|x|^2)_+^n$, where $x\in B_{d+2}$ and $n=0,1,2,\ldots$.
The linear span of the set of those functions is dense in $R_{d+2}$, hence
by boundedness of $G_d$, $G_{d+2}$, $T$ and $T^{-1}$ the diagram commutes
for all $f\in R_{d+2}$.

Therefore we obtain a one to one correspondence between the radial eigenfunctions
of $G_{d+2}$ (or $\Delta^{\alpha/2}$) in $B_{d+2}$ and the $x_d$-antisymmetric eigenfunctions of $G_d$,
moreover, the corresponding eigenvalues are the same. In particular, $\lambda_1^{(d+2)} = \lambda_*^{(d)}$.
\end{proof}

We note that for functions $u$ in the domain of the generator $\Delta^{\alpha/2}$ we have
\begin{equation}\label{eq:Egen}
 \mathcal{E}(u) = -\int_B u \Delta^{\alpha/2}u\, dx,
\end{equation}
in particular (\ref{eq:Egen}) holds for $u(x)=(1-|x|^2)_+^p$ with $p\geq \alpha/2$.
We are now ready to prove our estimates of $\lambda_1$ and $\lambda_*$.

\begin{proof}[Proof of Corollary~\ref{cor:est}]
Let $\psi$ be as in Lemma~\ref{lem:updest}. From that lemma, (\ref{eq:Egen})
 and \cite[Lemma~2.2]{DydaHardy1dim} (or \cite[Proposition~2.3]{MR2469027})
 we obtain
\[
 \mathcal{E}(u) \geq \int_B u^2(x) \frac{-\Delta^{\alpha/2} \psi(x)}{\psi(x)}\,dx
\geq \mu_{d,\alpha} \int_B u^2(x)\,dx,
\]
and hence $\lambda_1 \geq \mu_{d,\alpha}$.
The second part follows now from Proposition~\ref{prop:lambdaeq}.
\end{proof}

\begin{table}[!htb]
\caption{Lower and upper bounds for $\lambda_1$ and $\lambda_*$.
Lower bounds of Corollary~\ref{cor:est} are in the top row.
In the middle row, in italic, there are the upper bounds obtained from (\ref{eq:var}), by plugging in
a certain linear combination of functions
$u_{j+\alpha/2}$  with $j=0,1,\ldots,12$.
The (near optimal) linear combinations were found numerically.
In the bottom row, we give upper bounds obtained by considering
functions $u$ given by (\ref{eq:ff}) and $\eta$ given by (\ref{eq:eta}).
}
\centering
\begin{tabular}{c|c|c|c|c|c}
\hline\hline
\multirow{2}{*}{$\alpha$} &
\multirow{2}{*}{$\lambda_1$ for $d=1$} &
\multirow{2}{*}{$\lambda_1$ for $d=2$} &
$\lambda_1$ for $d=3$  & $\lambda_1$ for $d=4$  &$\lambda_1$ for $d=5$ \\
&&& $\lambda_*$ for $d=1$ & $\lambda_*$ for $d=2$ & $\lambda_*$ for $d=3$ \\
\hline
\multirow{3}{*}{0.1} &
0.9676 & 1.04874 & 1.08633 & 1.1102 & 1.12756 \\
& \emph{0.97261} & \emph{1.05096} & \emph{1.09221} & \emph{1.12082} & \emph{1.14301} \\
& 0.97273 & 1.05103 & 1.09225 & 1.12093 & 1.14327 \\

\hline
\multirow{3}{*}{0.2} &
0.94993 & 1.10549 & 1.18391 & 1.23565 & 1.27419 \\
& \emph{0.95747} & \emph{1.10993} & \emph{1.19655} & \emph{1.25903} & \emph{1.30877} \\
& 0.95764 & 1.11001 & 1.19663 & 1.25927 & 1.30934 \\

\hline
\multirow{3}{*}{0.5} &
0.96202 & 1.3313 & 1.56035 & 1.72814 & 1.86169 \\
& \emph{0.97017} & \emph{1.34374} & \emph{1.60155} & \emph{1.80843} & \emph{1.98572} \\
& 0.97029 & 1.3438 & 1.60173 & 1.8092 & 1.98766 \\

\hline
\multirow{3}{*}{1} &
1.15384 & 1.96349 & 2.60869 & 3.15561 & 3.63636 \\
& \emph{1.15778} & \emph{2.00612} & \emph{2.75476} & \emph{3.45334} & \emph{4.12131} \\
& 1.1578 & 2.00618 & 2.75548 & 3.45616 & 4.12824 \\

\hline
\multirow{3}{*}{1.5} &
1.58614 & 3.13569 & 4.61848 & 6.03622 & 7.39626 \\
& \emph{1.59751} & \emph{3.27594} & \emph{5.05977} & \emph{6.94732} & \emph{8.93319} \\
& 1.59751 & 3.27624 & 5.06201 & 6.95522 & 8.95256 \\

\hline
\multirow{3}{*}{1.8} &
2.01395 & 4.28394 & 6.65946 & 9.07867 & 11.51297 \\
& \emph{2.04874} & \emph{4.56719} & \emph{7.50312} & \emph{10.82218} & \emph{14.49989} \\
& 2.04876 & 4.56781 & 7.50715 & 10.83601 & 14.53414 \\

\hline
\multirow{3}{*}{1.9} &
2.19524 & 4.77496 & 7.54923 & 10.43088 & 13.37504 \\
& \emph{2.24406} & \emph{5.13213} & \emph{8.59576} & \emph{12.5934} & \emph{17.09653} \\
& 2.24409 & 5.1329 & 8.60059 & 12.60997 & 17.13776 \\

\hline\hline
\end{tabular}
\label{table:upper}
\end{table} 

\section{Upper bounds for eigenvalues}\label{sec:upper}
For functions $u\in D(\mathcal{E})$, by the variational formula, we have
\begin{equation}\label{eq:var}
 \lambda_1 \leq \frac{\mathcal{E}(u)}{\int_B u^2\,dx}.
\end{equation}
For $u$ being a linear combinations of functions $u_{j+\alpha/2}^{(d)}$,
it is easy to compute the right hand side of (\ref{eq:var})
by using (\ref{eq:Egen}), Theorem~\ref{thm:up} and the formula
\[
 \int_B |x|^s (1-|x|^2)^t \,dx = \frac{\pi^{d/2} \Gamma( \frac{s+d}{2}) \Gamma( t+1)}
{\Gamma( \frac{d}{2}) \Gamma( \frac{s+d}{2} + t+1)},\quad s>-d,\;\; t>-1.
\]
In particular, for functions of the form
\begin{equation}\label{eq:ff}
 u(x) = (1-|x|^2)_+^{\alpha/2} + \eta (1-|x|^2)_+^{1+\alpha/2},
\end{equation}
we can  explicitly find $\eta$ minimising the right hand side of (\ref{eq:var}).
A calculation yields,
\begin{equation}\label{eq:eta}
\eta_{min} = \frac{\sqrt{w} + d^2+2d-2a^2-6a-4 }{ 4a^2+12a+8},
\end{equation}
where
\begin{align*}
 w &= d^4+4ad^3+8d^3 +8a^2d^2 + 32ad^2 + 28d^2 + 8a^3d+48a^2d\\
& + 88ad + 48d + 4a^4 + 24a^3 + 52a^2 + 48a+16.
\end{align*}

It is possible to further improve the estimates by taking more functions
$u_{j+\alpha/2}^{(d)}$ to define~$u$.
Then, however, the ratio (\ref{eq:var}) should be minimised numerically.

In  Table~\ref{table:upper}, we give bounds for $\lambda_1$ (and hence also for $\lambda_*$)
 obtained by (\ref{eq:var}) and Corollary~\ref{cor:est}.

\subsection*{Acknowledgements}
The author wishes to thank Tadeusz Kulczycki for helpful discussions, Krzysztof Bogdan for
careful reading of the manuscript and numerous comments, and Mateusz Kwa\'snicki for
fruitful discussions and, in particular, suggesting the proof of Proposition~\ref{prop:lambdaeq}.

This research was partially supported by MNiSW grant N N201 397137, 
and by the DFG through SFB-701 'Spectral Structures and Topological Methods in Mathematics'.

\def\cprime{$'$}

\end{document}